\RequirePackage{fix-cm}
\documentclass[smallextended]{svjour3}
\usepackage{amsmath}
\usepackage{amsfonts}
\usepackage[dvipspdf]{xcolor}

\smartqed

\def\zhou#1 {\fbox {\footnote {\ }}\ \footnotetext { From Zhou: {\color{red}#1}}}

\def\alex#1 {\fbox {\footnote {\ }}\ \footnotetext { From Alex: {\color{blue}#1}}}

\def\qi#1 {\fbox {\footnote {\ }}\ \footnotetext { From Qi: {\color{blue}#1}}}


\newcommand{\bbZ}{{\mathbb Z}}
\newcommand{\tr}{{\rm Tr}}
\newcommand{\gf}{{\mathbb F}}

\newcommand{\calD}{{\mathcal D}}
\newcommand{\dy}{{\rm DY}}
\newcommand{\rt}{{\rm RT}}
\newcommand{\p}{{\rm P}}





\begin{document}

\title{Skew Hadamard Difference Sets from Dickson Polynomials of Order $7$}

\author{Cunsheng Ding \and Alexander Pott \and Qi Wang}

\institute{
C. Ding \at Department of Computer Science and Engineering, The Hong Kong University of Science and Technology, Clear Water Bay, Kowloon, Hong Kong \\
\email{cding@ust.hk}\\
A. Pott \at Institute of Algebra and Geometry, Faculty of Mathematics, Otto-von-Guericke University Magdeburg, Universit\"atsplatz 2, 39106, Magdeburg, Germany\\
\email{alexander.pott@ovgu.de}\\
Q. Wang \at Institute of Algebra and Geometry, Faculty of Mathematics, Otto-von-Guericke University Magdeburg, Universit\"atsplatz 2, 39106, Magdeburg, Germany\\
\email{qi.wang@ovgu.de}
}

\date{Received: date / Accepted: date}

\maketitle

\begin{abstract}
Skew Hadamard difference sets are an interesting topic of study for over seventy years. For a long time, it had been conjectured the classical Paley difference sets (the set of nonzero quadratic residues in $\gf_q$ where $q \equiv 3 \bmod{4}$) were the only example in abelian groups. In 2006, the first author and Yuan disproved this conjecture by showing that the image set of $\calD_5(x^2,u)$ is a new skew Hadamard difference set in $(\gf_{3^m},+)$ with $m$ odd, where $\calD_n(x,u)$ denotes the first kind of Dickson polynomials of order $n$ and $u \in \gf_q^*$. The key observation in the proof is that $\calD_5(x^2,u)$ is a planar function from $\gf_{3^m}$ to $\gf_{3^m}$ for $m$ odd. Since then a few families of new skew Hadamard difference sets have been discovered. In this paper, we prove that for all $u \in \gf_{3^m}^*$, the set $D_u := \{\calD_7(x^2,u) : x \in \gf_{3^m}^* \}$ is a skew Hadamard difference set in $(\gf_{3^m}, +)$, where $m$ is odd and $m \not \equiv 0 \pmod{3}$. The proof is more complicated and different from that of Ding-Yuan skew Hadamard difference sets since $\calD_7(x^2,u)$ is not planar in $\gf_{3^m}$. Furthermore, we show that such skew Hadamard difference sets are inequivalent to all existing ones for $m = 5, 7$ by comparing the triple intersection numbers.

\keywords{difference set \and Dickson polynomial \and permutation polynomial \and skew Hadamard difference set}

\subclass{05B10 \and 05B30} 
\end{abstract}

\section{Introduction}\label{sec-intro}

Let $(G,+)$ be a finite group of order $v$ (written additively). A $k$-element subset $D$ of $G$ is a $(v,k,\lambda)$ {\em difference set} if the list of differences $d_1 - d_2$ with $d_1, d_2 \in D$ and $d_1 \ne d_2$, contains each nonzero element in $G$ exactly $\lambda$ times. Equivalently, in the language of group ring, in $\bbZ[G]$, we have
$$
DD^{(-1)} = k - \lambda + \lambda G,
$$
where $D^{(-1)} = \sum_{d \in D} - d$. For two difference sets $D_1$ and $D_2$ with the same parameters in an abelian group $G$, they are said to be {\em equivalent} if there exists an automorphism $\alpha$ of $G$ and an element $b \in G$ such that $\alpha (D_1) = D_2 + b$.   
For more information on difference sets, we refer to~\cite{Bau71,BJL99,Lan83}.  

Suppose that $D$ is a difference set in $(G, +)$ of order $v$. If $G$ is the disjoint union of $D$, $-D$, and $\{0 \}$, where $-D = \{- d: d \in D\}$, then $D$ is called {\em skew} (or {\em antisymmetric}). It turns out that skew difference sets must have the parameters $(4n - 1, 2n - 1, n - 1)$ up to complementation, and are called {\em skew Hadamard difference sets}. The first family of skew Hadamard difference sets is the classical Paley difference sets in $(\gf_q, + )$, i.e., the
set of nonzero quadratic residues of $\gf_q$, where $q \equiv 3 \pmod{4}$ is a prime power. The second family of skew Hadamard difference sets had not been known until 2006~\cite{DY06}. In~\cite{DY06}, using $\calD_5(x,u)$, the first kind of Dickson polynomials of order $5$, the first author and Yuan constructed a new family of skew Hadamard difference sets in $(\gf_{3^m}, +)$. The proof relies on the fact that $\calD_5(x^2,u)$ is planar for all $u \in \gf_{3^m}^*$, i.e., $\calD_5( (x+a)^2, u) - \calD_5(x^2, u)$ permutes $\gf_{3^m}$ for each $a \in \gf_{3^m}^*$. Later, this construction was generalized by Weng, Qiu, Wang and Xiang~\cite{WQWX07} by showing that the image set of every $2$-to-$1$ planar function from $\gf_q$ to $\gf_q$ is a skew Hadamard difference set if $q \equiv 3 \pmod{4}$. The first author, Wang and Xiang also proposed another family of skew Hadamard difference sets in
$(\gf_{3^m}, +)$ using the permutation polynomials from the Ree-Tits slice symplectic spreads~\cite{DWX07}. For recent progress on skew Hadamard difference sets, for example, see~\cite{WH09,Muzy10,Feng11,FX12}. 


The idea behind the construction in~\cite{DWX07} is the following: If the image set of the classical Paley difference set under a permutation polynomial is still a difference set, then the image set must be a skew Hadamard difference set. The problem of constructing new skew Hadamard difference sets then becomes that of finding such permutation polynomials by which the difference set property of the classical Paley difference sets is preserved. In fact, the classical Paley
difference sets and the construction in~\cite{DY06} can be also viewed in such a way, but the proofs are more transparent by considering the skew Hadamard difference sets therein as the image sets of corresponding planar functions. In this paper, we make use of this idea again to construct new skew Hadamard difference sets. The permutation polynomials we apply are $\calD_7(x,u)$, the first kind of Dickson polynomials of order $7$. However, the proof is quite different from that in
the construction using the Dickson polynomials of order $5$~\cite{DY06}, noting that $\calD_7(x^2,u)$ is not a planar function in $\gf_{3^m}$. The proof techniques were once employed in~\cite{DWX07}, but are more complicated here because of the structure of $\calD_7(x,u)$. Besides a new family of skew Hadamard difference sets in $(\gf_{3^m},+)$, the contribution of this paper is also to enrich the possibilities of constructing skew Hadamard difference sets using permutation polynomials, and this may lead to a better understanding of such kinds of permutation polynomials.

The rest of the present paper is organized as follows. In Section~\ref{sec-pre}, we briefly introduce some preliminaries, and present some auxiliary results on the Dickson polynomials of order $7$. In Section~\ref{sec-main}, using the first kind of Dickson polynomials of order $7$, we construct a family of skew Hadamard difference sets. Moreover, we discuss the inequivalence relations between the new skew Hadamard difference sets and the
existing ones. In Section~\ref{sec-con}, we conclude this paper with some open problems.

\section{Preliminaries and some auxiliary results}\label{sec-pre}

In this section, we first introduce some preliminaries, which are the main tools later in our proof. We then derive some auxiliary results on the first kind of Dickson polynomials of order $7$.

\subsection{Gauss sums and the Stickelberger's theorem} 

Let $q = p^m$ be a prime power and $\xi_n$ be a fixed complex primitive $n$-th root of unity. Define $\psi: \gf_q \mapsto \mathbb{C}^*$ by $\psi (x) = \xi_p^{\tr_{q/p}(x)}$, 
which is a nontrivial character of $(\gf_q, +)$, where $\tr_{q/p}$ denotes the trace function from $\gf_q$ to $\gf_p$. Let $\chi: \gf_q^* \mapsto \mathbb{C}^*$ be a character of $\gf_q^*$. The Gauss sum is defined by
$$
g(\chi) := \sum_{x \in \gf_q^*} \chi(x) \psi(x).
$$
An elementary property of Gauss sums is the following:
$$
g(\chi) \overline{g(\chi)} = q, \quad \textrm{for each $ \chi \ne \chi_0$},
$$
where $\chi_0$ is the trivial character of $\gf_q^*$. Gauss sums can also be viewed as the Fourier coefficients in the Fourier expansion of $\psi$ in terms of the characters of $\gf_q^*$, i.e., for each $x \in \gf_q^*$,
\begin{equation}\label{eqn-invf}
\psi(x) = \frac{1}{q-1} \sum_{\chi \in X} g(\chi) \chi^{-1}(x),
\end{equation}
where $X$ denotes the character group of $\gf_q^*$. In order to utilize Gauss sums, we need Stickelberger's theorem on the prime ideal factorization of Gauss sums. 

For each prime ideal $\mathfrak{p}$ in $\bbZ[\xi_{q-1}]$ lying over $p$, we have that $\bbZ[\xi_{q-1}]/\mathfrak{p}$ is a finite field of order $q$, denoted by $\gf_q$. Let $\omega_{\mathfrak p}$ be the Teichm{\"u}ller character on $\gf_q$, which is an isomorphism
$$
\omega_{\mathfrak p} : \gf_q^* \mapsto \{1, \xi_{q-1}, \xi_{q-1}^2, \ldots, \xi_{q-1}^{q-2} \}
$$
satisfying $\omega_{\mathfrak p} (x) \pmod{ {\mathfrak p}} = x$, for all $x \in \gf_q^*$. It is clear that $\omega_{\mathfrak p}$ generates the character group of $\gf_q^*$.

For each $\bbZ[\xi_{q-1}]$, the prime ideal ${\mathfrak p}$ splits into a prime power ideal in $\bbZ[\xi_{q-1}, \xi_p]$, and ${\mathfrak p} = {\mathfrak B}^{p-1}$ with ${\mathfrak B}$ a prime ideal in $\bbZ[\xi_{q-1}, \xi_p]$. Let $v_{\mathfrak B}$ be the ${\mathfrak B}$-adic valuations in $\bbZ[\xi_{q-1}, \xi_p]$. The Stickelberger's theorem is the following~\cite[p. 344]{BERW98}.

\begin{theorem}\label{thm-sti}
  Let $q = p^m$ be a prime power. For each integer $a = \sum_{i=0}^{m-1} a_i p^i$ with $0 < a < q-1$, define the {\em $p$-adic weight} of $a$ as $w(a) = \sum_{i=0}^{m-1} a_i$. Then
  $$
  v_{\mathfrak B}(g(\omega_{\mathfrak p}^{-a})) = w(a).
  $$
\end{theorem}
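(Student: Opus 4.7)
My plan is to reduce the general statement to the base case $a=1$ by exploiting two structural features of Gauss sums: first, the multiplication formula $g(\chi_1)\,g(\chi_2) = J(\chi_1,\chi_2)\,g(\chi_1\chi_2)$, valid whenever $\chi_1$, $\chi_2$, and $\chi_1\chi_2$ are all nontrivial, where the Jacobi sum $J(\chi_1,\chi_2) = \sum_{t \ne 0,1} \chi_1(t)\chi_2(1-t)$ lies in $\bbZ[\xi_{q-1}]$ and is free of $\xi_p$; and second, the Galois action $\sigma_p\colon \xi_p \mapsto \xi_p^p$ that fixes $\bbZ[\xi_{q-1}]$, sends $g(\chi)$ to $g(\chi^p)$, and after a compatible choice of $\mathfrak B$ satisfies $v_{\mathfrak B}(g(\omega_{\mathfrak p}^{-a})) = v_{\mathfrak B}(g(\omega_{\mathfrak p}^{-pa \bmod (q-1)}))$. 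Together these let one collapse the $p$-adic expansion $a = \sum a_i p^i$ onto its individual digits.

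I would carry this out in three stages. First, I would establish the base case directly: since $\xi_p \equiv 1 \pmod{\mathfrak B}$, one sees that
$$g(\omega_{\mathfrak p}^{-1}) \equiv \sum_{x \in \gf_q^*} \omega_{\mathfrak p}^{-1}(x) \equiv 0 \pmod{\mathfrak B},$$
so $v_{\mathfrak B} \ge 1$; expanding $\xi_p = 1 + (\xi_p - 1)$, with $\xi_p - 1$ a uniformizer of $\mathfrak B$ over $\mathfrak p$, the next term is a $\mathfrak B$-unit times $\xi_p - 1$, pinning the valuation at exactly $1$. Second, I would show that every Jacobi sum $J(\chi_1,\chi_2)$ encountered is a $\mathfrak B$-adic unit: since $J \in \bbZ[\xi_{q-1}]$ and $p$ is unramified in this ring, its $\mathfrak B$-valuation is a multiple of $p-1$; reducing $J$ modulo $\mathfrak p$ and recognising the result as a Jacobi sum over the residue field $\gf_q$ (nonzero under the nontriviality hypotheses on $\chi_1,\chi_2,\chi_1\chi_2$) rules out any positive valuation. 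Third, I would iterate the multiplication formula to obtain a factorisation
$$g(\omega_{\mathfrak p}^{-a}) = U \cdot \prod_{i=0}^{m-1} g\bigl(\omega_{\mathfrak p}^{-a_i p^i}\bigr)$$
where $U$ is a product of Jacobi sums (hence a $\mathfrak B$-unit), apply the Galois action to replace each factor by $g(\omega_{\mathfrak p}^{-a_i})$, and reduce to the one-digit claim $v_{\mathfrak B}(g(\omega_{\mathfrak p}^{-b})) = b$ for $0 < b < p$, which will follow by a short induction on $b$ using one more application of the multiplication formula together with the base case.

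\textbf{Main obstacle.} The principal technical difficulty is the $\mathfrak B$-unit statement for Jacobi sums: one must simultaneously confirm that $J$ sits in the unramified subring $\bbZ[\xi_{q-1}]$ (easy) and that its reduction mod $\mathfrak p$ is nonzero (the delicate part), and the degenerate pairs where $\chi_1\chi_2$ is trivial must be detoured around using the standard identity $g(\chi)\,g(\chi^{-1}) = \chi(-1)\,q$. Once this unit property is secured, everything else becomes bookkeeping with the base-$p$ digits of $a$.
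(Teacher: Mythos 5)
First, a point of reference: the paper does not prove Theorem~\ref{thm-sti} at all --- it is quoted from Berndt--Evans--Williams \cite{BERW98} --- so your proposal can only be measured against the standard proof, whose skeleton (base case $a=1$, digit-by-digit factorisation through Gauss-sum multiplication, Frobenius invariance of the valuation) you have correctly identified. The genuine gap is in your stage two: the claim that $J(\chi_1,\chi_2)$ is a $\mathfrak B$-unit whenever $\chi_1,\chi_2,\chi_1\chi_2$ are all nontrivial is false. Stickelberger's theorem itself shows $v_{\mathfrak B}\bigl(J(\omega_{\mathfrak p}^{-a},\omega_{\mathfrak p}^{-b})\bigr)=w(a)+w(b)-w(a+b \bmod (q-1))$, which equals $(p-1)$ times the number of carries in the base-$p$ addition of $a$ and $b$; for instance $q=p=7$, $a=b=4$ gives all three characters nontrivial and yet $v_{\mathfrak p}(J)=1$. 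Concretely, the reduction of $J(\omega_{\mathfrak p}^{-a},\omega_{\mathfrak p}^{-b})$ modulo $\mathfrak p$ is, up to sign, the binomial coefficient $\binom{q-1-b}{a}$ in characteristic $p$, which by Lucas' theorem vanishes exactly when a carry occurs. The Jacobi sums that arise in your stage-three factorisation along the digits of $a$ (and in the one-digit induction $1+(b-1)=b\le p-1$) are precisely the carry-free ones, so the plan is repairable --- but establishing the unit property in those cases \emph{is} the substantive content of the theorem, and ``nontriviality of the characters'' does not deliver it. Since this lemma is the engine of stages two and three, the proposal as written does not close. A second, smaller slip: the Galois action you invoke, $\xi_p\mapsto\xi_p^p$, is the identity relation $\xi_p^p=1$ and is not an automorphism; what you need is the Frobenius $\xi_{q-1}\mapsto\xi_{q-1}^p$ fixing $\xi_p$, which stabilises $\mathfrak p$ and hence the unique $\mathfrak B$ above it and sends $g(\omega_{\mathfrak p}^{-a})$ to $g(\omega_{\mathfrak p}^{-pa})$, giving the valuation identity you want. (Your base case is fine: the coefficient of $\xi_p-1$ reduces mod $\mathfrak p$ to $\sum_{x\in\gf_q^*}x^{-1}\tr_{q/p}(x)=-1$, a unit.)

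If you want to keep your structure while avoiding the delicate unit statement altogether, the classical repair is this: use only that Jacobi sums are algebraic integers, so the multiplication formula gives the subadditivity $v_{\mathfrak B}(g(\omega_{\mathfrak p}^{-(a+b)}))\le v_{\mathfrak B}(g(\omega_{\mathfrak p}^{-a}))+v_{\mathfrak B}(g(\omega_{\mathfrak p}^{-b}))$ whenever $a,b,a+b\not\equiv 0$. Combined with your base case and the Frobenius invariance, the digit decomposition then yields the \emph{upper} bound $v_{\mathfrak B}(g(\omega_{\mathfrak p}^{-a}))\le w(a)$ for all $0<a<q-1$. Now bring in $g(\chi)g(\chi^{-1})=\chi(-1)q$, which you had relegated to a side remark: it gives $v_{\mathfrak B}(g(\omega_{\mathfrak p}^{-a}))+v_{\mathfrak B}(g(\omega_{\mathfrak p}^{-(q-1-a)}))=m(p-1)=w(a)+w(q-1-a)$, and the two upper bounds are therefore both equalities. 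This route makes every Jacobi sum harmless and turns the rest of your outline into the bookkeeping you intended it to be.
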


\subsection{The Dickson polynomials and some auxiliary results}

Let $m > 0$ be an integer and $q = p^m$ with $p$ a prime. For each $u \in \gf_q$, the {\em Dickson polynomial} $\calD_n(x,u)$ of the first kind is defined by~\cite{LMT93,LN97}
$$
\calD_n(x,u) := \sum_{j=0}^{\lfloor n/2 \rfloor} \frac{n}{n-j} {n-j \choose j} (-u)^j x^{n-2j} .
$$
In~\cite{DY06}, the first author and Yuan showed that the polynomial $g_u (x) = \calD_5(x^2, -u)$ is a planar function from $\gf_{3^m}$ to $\gf_{3^m}$, and then constructed skew Hadamard difference sets by using the image set of $g_u (x)$. We also refer to~\cite{WQWX07} for an explanation of this construction from the viewpoint of presemifields. When the order $n = 7$, the Dickson polynomial is
$$
\calD_7(x,u) = x^7  - u x^5 - u^2 x^3 - u^3 x .
$$

Hereafter, let $q = 3^m$. We will employ the Dickson polynomial $\calD_7(x,u)$ to construct new skew Hadamard difference sets in $(\gf_q, +)$. To this end, we need the following auxiliary results.

\begin{lemma}\cite{LMT93,LN97}\label{lem-dpper}
The Dickson polynomial $\calD_n(x,u)$, for each $u \in \gf_q^*$, is a permutation polynomial of $\gf_q$ if and only if $\gcd(n, q^2 - 1) = 1$.
\end{lemma}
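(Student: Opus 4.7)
The plan is to derive the permutation characterization from the fundamental functional equation
\[
\calD_n\bigl(y + u y^{-1},\, u\bigr) \;=\; y^n + u^n y^{-n},
\]
which I would establish by induction on $n$ using the standard three-term recurrence $\calD_n(x,u) = x\,\calD_{n-1}(x,u) - u\,\calD_{n-2}(x,u)$ with $\calD_0(x,u) = 2$ and $\calD_1(x,u) = x$. Together with the substitution $x = y + u/y$, this functional equation converts the study of $\calD_n$ on $\gf_q$ into the study of the power map $y \mapsto y^n$ on certain multiplicative sets inside $\gf_{q^2}^*$.

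For each $x \in \gf_q$ the quadratic $t^2 - xt + u = 0$ has roots that either lie in $\gf_q^*$ or form a Galois-conjugate pair $y, y^q$ in $\gf_{q^2} \setminus \gf_q$; in the latter case $y$ lies in the coset $U_u := \{y \in \gf_{q^2}^* : y^{q+1} = u\}$ of the norm-one subgroup of $\gf_{q^2}^*$. Consequently, $\calD_n$ permutes $\gf_q$ if and only if $y \mapsto y^n + u^n y^{-n}$ is injective on $\gf_q^* \cup U_u$ modulo the involution $y \leftrightarrow u/y$, with the value at $x = 0$ (attained only when $y^2 = -u$) inserted appropriately.

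For sufficiency, assuming $\gcd(n, q^2 - 1) = 1$, the power map $y \mapsto y^n$ permutes $\gf_{q^2}^*$ and hence restricts to bijections on $\gf_q^*$ and on the norm-one subgroup (so also on its coset $U_u$). Any collision $y_1^n + u^n y_1^{-n} = y_2^n + u^n y_2^{-n}$ forces $y_2^n \in \{y_1^n,\, u^n y_1^{-n}\}$; inverting the power map on the appropriate piece yields $y_2 \in \{y_1, u/y_1\}$, i.e., the same preimage $x$. For necessity, when $\gcd(n, q-1) = d > 1$ I would pick a nontrivial $d$-th root of unity $\zeta \in \gf_q^*$ and exhibit the collision $\calD_n(1+u, u) = \calD_n(\zeta + u/\zeta, u)$; when $\gcd(n, q+1) > 1$ I would fix any $y_1 \in U_u$ and a nontrivial $\zeta$ of order dividing $\gcd(n, q+1)$ in the norm-one subgroup, and observe that $y_2 := \zeta y_1$ lies in $U_u$ and satisfies $y_2^n = y_1^n$.

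The main obstacle is handling the small-order edge cases in the necessity direction: when the chosen $\zeta$ has order $2$, it can happen that $\zeta y_1 = u/y_1$ (that is, the new point $y_2$ coincides with the involution-partner of $y_1$), producing only a trivial identity rather than a genuine collision. I would resolve this by exploiting the sizes of the ambient sets, namely $|\gf_q^*| = q-1$ and $|U_u| = q+1$, each strictly exceeding the number of involution-fixed points; this guarantees that a collision-inducing $\zeta$ can always be placed away from the degenerate configuration, and thus that the permutation property genuinely fails whenever $\gcd(n, q^2 - 1) > 1$.
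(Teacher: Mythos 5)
This lemma is quoted in the paper from \cite{LMT93,LN97} without proof, so there is no internal argument to compare against; what you propose is essentially the classical proof of the Dickson--N\"obauer criterion that those references give: the functional equation $\calD_n(y+uy^{-1},u)=y^n+u^n y^{-n}$, the parametrization of each $x\in\gf_q$ by a root $y\in\gf_q^*\cup U_u$ of $t^2-xt+u=0$ taken up to the involution $y\mapsto u/y$, sufficiency from injectivity of $y\mapsto y^n$ on $\gf_{q^2}^*$ combined with the factorization $(y_1^n-y_2^n)(y_1^n y_2^n-u^n)=0$, and necessity from roots of unity of order dividing $\gcd(n,q-1)$ or $\gcd(n,q+1)$. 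That route is correct, and the sufficiency half is complete as stated (injectivity of the power map on all of $\gf_{q^2}^*$ is all you need; there is no need to restrict to the separate pieces).

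The step that needs repair is your handling of degenerate collisions in the necessity direction. The obstruction is not that $y_1$ is a fixed point of $y\mapsto u/y$, nor is it confined to $\zeta$ of order $2$: for a fixed nontrivial $\zeta$ the pair $(y_1,\zeta y_1)$ collapses to a single $x$ exactly when $y_1^2=u\zeta^{-1}$, a condition with at most two solutions but unrelated to the fixed-point equation $y^2=u$. Hence your closing size comparison is aimed at the wrong set and is literally insufficient in a corner case: for $q=3$, $u=-1$, $\zeta=-1$ the involution has no fixed points in $\gf_3^*$, yet every $y_1\in\gf_3^*$ satisfies $\zeta y_1=u/y_1$, so the $\gf_q^*$-side construction yields no collision at all. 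The patch is easy: for $q>3$ choose $y_1$ with $y_1^2\neq u\zeta^{-1}$, which is possible since $q-1>2$; for $q=3$ the offending common divisor must be $2$, which also divides $q+1=4$, so run the same argument inside $U_u$, whose $q+1\geq 4$ elements exceed the at most two bad choices, or simply observe that for even $n$ the polynomial $\calD_n(x,u)$ is an even function of $x$ and hence not injective in odd characteristic. With this adjustment your proposal is a complete and correct proof of the quoted lemma.
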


\begin{corollary}\label{coro-dp7per}
  The Dickson polynomial $\calD_7(x,u)$, for $u \in \gf_q^*$, is a permutation polynomial of $\gf_q$ if and only if $m \not \equiv 0 \pmod{3}$.
\end{corollary}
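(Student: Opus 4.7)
The plan is to reduce the corollary directly to Lemma~\ref{lem-dpper} with $n=7$ and $q = 3^m$. By that lemma, $\calD_7(x,u)$ permutes $\gf_q$ for $u \in \gf_q^*$ if and only if $\gcd(7, q^2-1) = \gcd(7, 3^{2m}-1) = 1$. Since $7$ is prime, this is equivalent to $7 \nmid 3^{2m}-1$, i.e.\ $3^{2m} \not\equiv 1 \pmod{7}$.

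Next I would determine the multiplicative order of $3$ modulo $7$. A brief computation gives $3^1 \equiv 3$, $3^2 \equiv 2$, $3^3 \equiv 6$, $3^4 \equiv 4$, $3^5 \equiv 5$, $3^6 \equiv 1 \pmod{7}$, so the order is exactly $6$. Hence $3^{2m} \equiv 1 \pmod{7}$ if and only if $6 \mid 2m$, i.e.\ $3 \mid m$.

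Combining the two equivalences, $\gcd(7, 3^{2m}-1) = 1$ if and only if $m \not\equiv 0 \pmod 3$, which is exactly the statement of the corollary. There is no real obstacle here: the only content beyond citing Lemma~\ref{lem-dpper} is the tiny order-of-$3$-mod-$7$ calculation, so the proof should be a couple of lines.
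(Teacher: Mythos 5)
Your proposal is correct and follows the same route as the paper: both reduce the statement to Lemma~\ref{lem-dpper} with $n=7$, $q=3^m$, and the condition $\gcd(7,q^2-1)=1$; you merely make explicit the order-of-$3$ modulo $7$ computation that the paper leaves to the reader. Nothing is missing.
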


\begin{proof}
  Note that $\gcd(7,q^2-1) = 1$ if and only if $m \not\equiv 0 \pmod{3}$. The desired conclusion then follows from Lemma~\ref{lem-dpper}. 
\qed
\end{proof}

For each $u \in \gf_q^*$, define
\begin{equation}\label{eqn-du}
  D_u := \{ \calD_7(x^2, u) : x \in \gf_q^* \}.
\end{equation}
For the image set $D_u$, we have the following result.

\begin{lemma}\label{lem-is}
  If $m$ is odd and $m \not \equiv 0 \pmod{3}$, for each $u \in \gf_q^*$, we have
$$
D_u \cap (-D_u) = \emptyset,
$$
and
$$
D_u \cup (-D_u) \cup \{0\} = \gf_q.
$$
\end{lemma}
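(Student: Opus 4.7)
The plan is to reduce the statement $D_u \cap (-D_u) = \emptyset$ to a clean algebraic identity for $\calD_7$ by using its functional equation, and then to obtain the union statement by a simple counting argument. Recall that Dickson polynomials satisfy $\calD_n(y + u/y,\,u) = y^n + (u/y)^n$ for every $y \in \gf_{q^2}^*$. For any $A \in \gf_q$, the quadratic $y^2 - Ay + u = 0$ has roots in $\gf_{q^2}^*$, so every element of $\gf_q$ can be written as $y + u/y$ with $y \in \gf_{q^2}^*$.

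First I would show the following: for $A, B \in \gf_q$, the equation $\calD_7(A,u) + \calD_7(B,u) = 0$ forces $B = -A$. Writing $A = y_1 + u/y_1$ and $B = y_2 + u/y_2$, one computes
\begin{equation*}
y_1^7 y_2^7 \bigl(\calD_7(A,u) + \calD_7(B,u)\bigr) = (y_1^7 + y_2^7)(y_1^7 y_2^7 + u^7).
\end{equation*}
Since $m \not\equiv 0 \pmod 3$, Corollary~\ref{coro-dp7per} (more precisely $\gcd(7, q^2-1)=1$) implies that $z \mapsto z^7$ is a bijection on $\gf_{q^2}^*$. Hence the first factor vanishes only if $y_1 = -y_2$, and the second only if $y_1 y_2 = -u$. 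A direct substitution shows both conditions yield $B = -y_2 - u/y_2$ or $B = -y_1 - u/y_1$, i.e., $B = -A$ in both cases.

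Next I would apply this to $A = a^2$, $B = b^2$ with $a, b \in \gf_q^*$. If $\calD_7(a^2,u) \in -D_u$, then $b^2 = -a^2$, so $-1 = (b/a)^2$ is a square in $\gf_q$. However, $q = 3^m$ with $m$ odd gives $q \equiv 3 \pmod 4$, so $-1$ is a non-square, contradiction. This proves $D_u \cap (-D_u) = \emptyset$.

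For the union, since $\calD_7(x^2,u)$ is invariant under $x \mapsto -x$ and $\calD_7$ is a permutation of $\gf_q$ with $\calD_7(0,u) = 0$, the map $x \mapsto \calD_7(x^2,u)$ from $\gf_q^*$ onto $D_u$ is exactly $2$-to-$1$, giving $|D_u| = (q-1)/2$ and $0 \notin D_u$. Combined with $D_u \cap (-D_u) = \emptyset$, the three sets $D_u$, $-D_u$, $\{0\}$ are pairwise disjoint with total cardinality $q$, hence exhaust $\gf_q$. The only subtle step is the factorization argument and the justification that $c^7 = -1$ in $\gf_{q^2}^*$ forces $c = -1$; once $\gcd(7,q^2-1)=1$ is used, there are no genuine obstacles.
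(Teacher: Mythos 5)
Your proposal is correct, and its second half (the $2$-to-$1$ counting giving $|D_u| = (q-1)/2$ and hence the partition of $\gf_q$) is exactly the paper's argument. For the disjointness, however, you take a genuinely different route to the key implication $\calD_7(A,u) = -\calD_7(B,u) \Rightarrow A = -B$. The paper gets it in one line: since $\calD_7(x,u) = x^7 - ux^5 - u^2x^3 - u^3x$ contains only odd powers of $x$, it is an odd function, so $-\calD_7(y^2,u) = \calD_7(-y^2,u)$, and the permutation property (Corollary~\ref{coro-dp7per}) immediately forces $x^2 = -y^2$; the contradiction with $-1$ being a non-square in $\gf_{3^m}$, $m$ odd, is then the same as yours. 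You instead lift to $\gf_{q^2}$ via the parametrization $A = y + u/y$, use the functional equation $\calD_7(y+u/y,u) = y^7 + u^7/y^7$, the factorization $y_1^7y_2^7\bigl(\calD_7(A,u)+\calD_7(B,u)\bigr) = (y_1^7+y_2^7)(y_1^7y_2^7+u^7)$, and the bijectivity of $z \mapsto z^7$ on $\gf_{q^2}^*$ coming from $\gcd(7,q^2-1)=1$; both vanishing cases indeed give $B = -A$, so the argument is sound. What your route buys is self-containedness: it simultaneously re-derives the injectivity of $\calD_7(\cdot,u)$ on $\gf_q$ (this factorization is essentially the standard proof of Lemma~\ref{lem-dpper}) together with its oddness, at the cost of working in the quadratic extension and verifying an identity. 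What the paper's route buys is brevity and generality: it only uses that $\calD_7(\cdot,u)$ is an odd permutation polynomial, which is why the authors can remark that the same lemma holds for any odd permutation polynomial in place of $\calD_7$, whereas your computation is tied to the Dickson structure.
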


\begin{proof}
First, suppose that $\calD_7(x^2, u) = - \calD_7(y^2, u)$ for some $x, y \in \gf_q^*$, which means $\calD_7(x^2, u) = \calD_7(-y^2, u)$. Note that the Dickson polynomial $\calD_7(x,u)$ is a permutation polynomial when $m \not \equiv 0 \pmod{3}$ by Lemma~\ref{coro-dp7per}. We then have $x^2 = - y^2$, which implies that $-1$ is a square in $\gf_q$. Since $q = 3^m$ and $m$ is odd, this is a contradiction. Thus, we have $D_u \cap (-D_u) = \emptyset$. 

Second, by the first assertion of this lemma, it suffices to prove that $|D_u| = (q-1)/2$. For each $u \in \gf_q^*$, note that $\calD_7(0,u) = 0 $. Since $\calD_7(x,u)$ is a permutation polynomial, we have $\calD_7(x^2,u) = 0$ if and only if $x = 0$. It then follows that $|D_u| = (3^m - 1)/2$, which completes the proof.
\qed
\end{proof}

We remark that if we replace the Dickson polynomial $\calD_7(x,u)$ in (\ref{eqn-du}) with any other permutation polynomial of $\gf_q$, we have the similar result to Lemma~\ref{lem-is}.

\section{A family of skew Hadamard difference sets}\label{sec-main} 

\subsection{The construction}

In this section, we show that the image set defined by (\ref{eqn-du}) is a skew Hadamard difference set in $(\gf_q, +)$. We first present the main theorem as follows.

\begin{theorem}\label{thm-main}
  Let $u \in \gf_q^*$ and $D_u$ be defined as in {\rm (\ref{eqn-du})}. If $m$ is odd  and $m \not \equiv 0 \pmod{3}$, $D_u$ is a skew Hadamard difference set in $(\gf_q, +)$.
\end{theorem}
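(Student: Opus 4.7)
The plan is to verify the standard character-theoretic criterion for $(q,(q-1)/2,(q-3)/4)$ difference sets using Gauss sums and Stickelberger's theorem, in the spirit of \cite{DWX07}. By Lemma \ref{lem-is}, $|D_u|=(q-1)/2$ and $\gf_q$ is the disjoint union $\{0\}\cup D_u\cup(-D_u)$, so $D_u$ will be a skew Hadamard difference set exactly when $\psi(D_u)=(-1\pm\sqrt{-q})/2$ for every nontrivial additive character $\psi$ of $\gf_q$. I would introduce the twisted character sum
$$
T_u := \sum_{y\in\gf_q^*}\eta(y)\,\psi(\calD_7(y,u)),
$$
with $\eta$ the quadratic character of $\gf_q^*$. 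Using that $\calD_7(\cdot,u)$ is a permutation fixing $0$ (Corollary \ref{coro-dp7per}) and $\sum_{y\in\gf_q^*}\psi(\calD_7(y,u))=-1$, splitting $T_u$ over squares and non-squares gives $T_u=2\psi(D_u)+1$, and the skew decomposition forces $T_u+\overline{T_u}=0$. Thus the theorem reduces to showing $T_u^2=-q$.

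To compute $T_u$, I would apply the Fourier inversion identity (\ref{eqn-invf}) to each of the four monomial factors of $\psi(\calD_7(y,u))=\psi(y^7)\psi(-uy^5)\psi(-u^2y^3)\psi(-u^3y)$. Writing $\chi_i=\omega_{\mathfrak p}^{a_i}$ and using the orthogonality relation $\sum_{y\ne 0}\omega_{\mathfrak p}^t(y)=(q-1)\,\mathbf{1}_{t\equiv 0\,(\bmod\,q-1)}$ to collapse the $y$-summation produces
$$
T_u=\frac{1}{(q-1)^3}\sum_{(a_1,a_2,a_3,a_4)}g(\omega_{\mathfrak p}^{a_1})g(\omega_{\mathfrak p}^{a_2})g(\omega_{\mathfrak p}^{a_3})g(\omega_{\mathfrak p}^{a_4})\,\omega_{\mathfrak p}^{-a_2}(-u)\,\omega_{\mathfrak p}^{-a_3}(-u^2)\,\omega_{\mathfrak p}^{-a_4}(-u^3),
$$
where the tuples $(a_1,a_2,a_3,a_4)\in(\bbZ/(q-1)\bbZ)^4$ are constrained by $7a_1+5a_2+3a_3+a_4\equiv(q-1)/2\pmod{q-1}$ (coming from the factor $\eta=\omega_{\mathfrak p}^{(q-1)/2}$).

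The core of the argument is then Stickelberger's theorem (Theorem \ref{thm-sti}), which identifies $v_{\mathfrak B}(g(\omega_{\mathfrak p}^{-a}))$ with the $3$-adic digit weight $w(a)$. Since $v_{\mathfrak B}(\sqrt{-q})=m$, I would stratify the summands of the above expansion by the total weight $\sum_i w(a_i)$ and show that all tuples of weight above a critical threshold vanish modulo a sufficiently high power of $\mathfrak B$. The exponent congruence together with minimality of the weight should pin the surviving tuples down to a short explicit list; the residual Gauss-sum products are then evaluated using $g(\chi)g(\bar\chi)=\chi(-1)q$ for $\chi\ne\chi_0$ and the evaluation of the quadratic Gauss sum in characteristic $3$, which equals $\pm\sqrt{-q}$ precisely because $m$ is odd. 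These contributions should collapse to $T_u^2=-q$.

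The main obstacle is exactly this last enumeration. Unlike the $\calD_5$ case of \cite{DY06}, where planarity of $\calD_5(x^2,u)$ collapsed everything to a two-variable problem, here $\calD_7(y,u)$ has four monomials, so the congruence on $(a_1,a_2,a_3,a_4)$ admits substantially more Stickelberger-minimal solutions. The bulk of the work will consist of sorting the surviving tuples by which $a_i$ vanish, tracking how the $3$-adic digits of the nonzero $a_i$ must align to satisfy the congruence with minimal total weight, and finally summing the Teichm\"uller-twisted Gauss-sum contributions, most likely with a split into subcases depending on whether $u$ is a square in $\gf_q^*$. Once the sum collapses to $\pm\sqrt{-q}$, the theorem follows immediately.
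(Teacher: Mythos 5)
Your overall toolkit (quadratic-twisted character sums, Fourier inversion into Teichm\"uller characters, Gauss sums, Stickelberger) is the same as the paper's, but there are two genuine gaps that would stop the argument. First, you reduce the theorem to the \emph{exact} evaluation $T_u^2=-q$, and then propose to prove it by stratifying the Gauss-sum expansion by Stickelberger weight, ``discarding'' high-weight tuples and evaluating the survivors. Stickelberger only controls $\mathfrak{B}$-adic valuations, i.e.\ congruences modulo powers of $\mathfrak{B}$; the higher-weight tuples do not vanish, they merely have larger valuation, so no finite enumeration of minimal-weight tuples can yield an exact identity for $T_u$. The paper sidesteps exactly this obstruction by invoking the Chen--Xiang--Sehgal criterion (Lemma~\ref{lem-sim} of the paper, from \cite{CXS94}): because $D_u$ is skew and fixed by the quadratic residues mod $3$, it suffices to prove the \emph{congruence} $\psi(D_u)\equiv(3^{(m-1)/2}-1)/2\pmod{3^{(m-1)/2}}$, equivalently $v_{\mathfrak B}(S_\beta)\ge m-1$ for the same twisted sum you call $T_u$. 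With that lemma, one only needs a lower bound on the valuation of \emph{every} term of the expansion, and no surviving-tuple enumeration or residual Gauss-sum evaluation is ever performed. Without that lemma (or some substitute converting a congruence into the difference-set property), your final step ``these contributions should collapse to $T_u^2=-q$'' is precisely the unproved hard part.

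Second, you expand all four monomials of $\calD_7(y,u)$, which leads to a three-parameter family of tuples and products of four Gauss sums; the resulting digit-weight inequalities would be far harder than anything treated in the paper, and you give no plan for them. The paper first applies the Frobenius inside the trace: $\tr\bigl(\beta\,\calD_7(x,u)\bigr)=\tr\bigl(\beta^3x^{21}-\beta^3u^3x^{15}-(\beta u^2+\beta^3u^9)x^3\bigr)$, so that the $x$-term merges with the $x^3$-term, and the substitution $y=x^3$ (a bijection) turns the exponent into the \emph{trinomial} $\eta$-type form $z^7+\eta_u z^5+\gamma_u z$ after normalization. Consequently only one or two Fourier expansions are needed, the valuation bounds reduce to $w(b)+w\bigl(\tfrac{q-1}{2}-5^{-1}\cdot 7b\bigr)\ge m$ and $w(b_1)+w(b_2)+w\bigl(\tfrac{q-1}{2}-7b_1-5b_2\bigr)\ge m$, and these are proved by a lengthy carry-sequence analysis (via Theorem~4.1 of \cite{HHKWX09}) in the appendix. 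So your proposal is missing both the reduction that makes the Stickelberger analysis tractable and the criterion that makes a valuation bound sufficient; as written, the route through an exact evaluation of $T_u$ would fail.
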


Since we have already proved that $D_u$ is skew in Lemma~\ref{lem-is}, it suffices to prove that $D_u$ is a difference set. In order to do this, we need to prove that for every nontrivial additive character $\psi$ of $\gf_q$, 
$$
\psi(D_u) \overline{\psi(D_u)} = \frac{q+1}{4} .
$$
We will use the following lemma~\cite{CXS94} to simplify the proof.

\begin{lemma}\label{lem-sim}
  Let $G$ be an abelian group of order $p^m$, where $p$ is a prime congruent to $3$ modulo $4$, and $m$ is an odd integer. Let $D$ be a subset of $G$ such that in $\bbZ[G]$, 
  $$
  D + D^{(-1)} = G - 1,
  $$
  and $D^{(t)} = D$ for every nonzero quadratic residue $t$ modulo $p$. If for every nontrivial additive character $\psi$ of $G$, 
  $$
  \psi (D) \equiv \frac{p^{(m-1)/2} - 1}{2} \pmod{p^{(m-1)/2}},
  $$
  then $D$ is a difference set in $G$.
\end{lemma}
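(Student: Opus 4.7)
The goal is to establish $|\psi(D)|^2 = (p^m+1)/4$ for every nontrivial additive character $\psi$ of $G$, since together with $|D| = (p^m-1)/2$ (forced by the first hypothesis) this is exactly the character-theoretic criterion for $D$ to be a $(p^m,\,(p^m-1)/2,\,(p^m-3)/4)$-difference set. Applying $\psi$ to $D + D^{(-1)} = G - 1$ yields $\psi(D) + \overline{\psi(D)} = -1$, so $\operatorname{Re}\psi(D) = -1/2$ and $T_\psi := 2\psi(D)+1$ is purely imaginary; equivalently, we must prove $|T_\psi|^2 = p^m$ for every such $\psi$.

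Next I would translate the invariance $D^{(t)} = D$ into a Galois statement. Write $p^e$ for the exponent of the root of unity in which $\psi$ takes values. For each integer $t$ that is a quadratic residue mod $p$, the automorphism $\sigma_t\colon \xi_{p^e}\mapsto \xi_{p^e}^{t}$ satisfies $\sigma_t(\psi(D)) = \psi(tD) = \psi(D)$. Since $p\equiv 3\pmod{4}$, the unique quadratic subfield of $\mathbb{Q}(\xi_{p^e})$ is $\mathbb{Q}(\sqrt{-p})$, and combining the QR-invariance with the relation $\sigma_{-1}(\psi(D)) = \overline{\psi(D)} = -1-\psi(D)$ confines $\psi(D)$ to $\mathcal{O}_{\mathbb{Q}(\sqrt{-p})} = \bbZ[(1+\sqrt{-p})/2]$. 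Together with $\operatorname{Re}\psi(D) = -1/2$, this gives $\psi(D) = (-1 + b_\psi\sqrt{-p})/2$, and the parity rule of this ring of integers (recall $-p\equiv 1\pmod 4$) forces $b_\psi$ to be an \emph{odd} integer.

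Third, I would unpack the congruence $\psi(D) \equiv (p^{(m-1)/2}-1)/2 \pmod{p^{(m-1)/2}}$ inside $\mathcal{O}_{\mathbb{Q}(\sqrt{-p})}$ by matching coefficients of $1$ and $\sqrt{-p}$: a short calculation gives $b_\psi = p^{(m-1)/2}\,c_\psi$ with $c_\psi$ an odd (hence nonzero) integer, so $|\psi(D)|^2 = (1 + p^m c_\psi^2)/4$. The conclusion then follows from Parseval:
\begin{equation*}
\sum_{\psi \ne \psi_0} |\psi(D)|^2 \;=\; |G|\,|D| - |D|^2 \;=\; \frac{p^{2m}-1}{4},
\end{equation*}
which after substitution collapses to $\sum_{\psi} c_\psi^2 = p^m - 1$. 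There are exactly $p^m - 1$ summands and each $c_\psi^2 \ge 1$, so equality forces $c_\psi^2 = 1$ for every $\psi$, whence $|\psi(D)|^2 = (p^m+1)/4$ as required.

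The main obstacle I anticipate lies in the second step --- pinning $\psi(D)$ inside $\mathbb{Q}(\sqrt{-p})$ when $G$ is not elementary abelian (so $e\ge 2$): the integer lifts of QRs mod $p$ do not a priori generate the full index-two subgroup of $(\bbZ/p^e)^*$, so one has to combine QR-invariance with the fact that $T_\psi$ transforms by the quadratic character $\eta$ of $\gf_p^*$ under the full Galois action, dividing by a quadratic Gauss sum if necessary to make the descent to the correct quadratic subfield explicit. For the elementary abelian case relevant to the paper's application ($p=3$, $G = (\gf_{3^m},+)$), this difficulty evaporates since $\bbZ[\xi_3] = \mathcal{O}_{\mathbb{Q}(\sqrt{-3})}$ is itself the ring of integers of the target quadratic field.
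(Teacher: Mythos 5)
The paper does not actually prove this lemma --- it is quoted from Chen--Xiang--Sehgal \cite{CXS94} and used as a black box --- so there is no internal proof to measure your argument against. Judged on its own, your proof is correct and complete, and it is essentially the standard argument for results of this type: reduce to $|\psi(D)|^2=(p^m+1)/4$; use $D+D^{(-1)}=G-1$ to get $\mathrm{Re}\,\psi(D)=-1/2$; use the multiplier condition to descend $\psi(D)$ into $\mathcal{O}_{\mathbb{Q}(\sqrt{-p})}$ and write $\psi(D)=(-1+b_\psi\sqrt{-p})/2$ with $b_\psi$ an odd integer; extract $p^{(m-1)/2}\mid b_\psi$ from the stated congruence; and finish with Parseval, which forces every $c_\psi^2$ to equal $1$. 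The computations check out: $\sum_{\psi\ne\psi_0}|\psi(D)|^2=(p^{2m}-1)/4$ does collapse to $\sum_\psi c_\psi^2=p^m-1$ over exactly $p^m-1$ nonzero summands. The one place where you hedge --- whether the QR-invariance really fixes $\psi(D)$ under the full index-two subgroup of $\mathrm{Gal}(\mathbb{Q}(\xi_{p^e})/\mathbb{Q})$ when the exponent $p^e$ has $e\ge 2$ --- resolves itself more cleanly than you suggest: the kernel of the reduction $(\bbZ/p^e\bbZ)^*\to(\bbZ/p\bbZ)^*$ has odd order $p^{e-1}$, so a unit mod $p^e$ is a square if and only if its image mod $p$ is a quadratic residue; hence the multipliers supplied by the hypothesis already exhaust the index-two subgroup and no division by a quadratic Gauss sum is needed. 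In any case, for the application in this paper ($p=3$, $G=(\gf_{3^m},+)$ elementary abelian) the point is vacuous, as you note.
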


Now we are ready to show that $D_u$ is a skew Hadamard difference set in $(\gf_q, +)$.

\begin{proof}[Theorem~\ref{thm-main}]
  
  Since $1\in \bbZ / 3\bbZ$ is the only nonzero quadratic residue modulo $3$, the condition of Lemma~\ref{lem-sim} can certainly be satisfied, i.e., $D_u^{(t)} = D_u$ for every nonzero quadratic residue $t$ modulo $3$. By Lemma~\ref{lem-is}, $D_u$ is skew. Then by Lemma~\ref{lem-sim}, it suffices to show that for every nontrivial additive character $\psi_\beta: \gf_q \mapsto {\mathbb C}^*$, 
\begin{equation}\label{eqn-goal}
  \psi_\beta(D_u) \equiv \frac{3^{(m-1)/2} - 1}{2} \pmod{3^{(m-1)/2}},
\end{equation}
where $\psi_\beta = \xi_3^{\tr(\beta x)}$, $\beta \in \gf_q^*$, $\xi_3 = e^{2\pi i/3}$, and $\tr$ denotes the trace function from $\gf_q$ to $\gf_3$.

We now compute the left hand side of (\ref{eqn-goal}). Let $\chi$ be the multiplicative quadratic character of $\gf_q$. Then
\begin{eqnarray*}
  \psi_\beta(D_u) & = & \sum_{x \in \gf_q^*} \psi_\beta (\calD_7(x,u)) \frac{\chi(x) + 1}{2} \\ 
  & = &  \frac{1}{2} \left( \sum_{x \in \gf_q^*} \psi_\beta (\calD_7(x,u)) \chi(x) + \sum_{x \in \gf_q^*} \psi_\beta (\calD_7(x,u)) \right) \\
  & = & \frac{1}{2} \left( \sum_{x \in \gf_q^*} \psi_\beta(\calD_7(x,u)) \chi(x) - 1 \right),
\end{eqnarray*}
where in the last equality we used the facts that $\calD_7(x,u)$ is a permutation polynomial of $\gf_q$ and $\calD_7(0,u) = 0$. It then follows that (\ref{eqn-goal}) is equivalent to
\begin{equation}\label{eqn-goal2}
  \sum_{x \in \gf_q^*} \psi_\beta (\calD_7(x,u)) \chi(x) \equiv 0 \pmod{3^{(m-1)/2}}.
\end{equation}
Let $S_{\beta} = \sum_{x \in \gf_q^*} \psi_\beta(\calD_7(x,u)) \chi(x)$. We then have
\begin{eqnarray*}
  S_\beta & = & \sum_{x \in \gf_q^*} \xi_3^{\tr(\beta^3 x^{21} - \beta^3 u^3 x^{15} - (\beta u^2 + \beta^3 u^9) x^3)} \chi(x) \\
  & = & \sum_{y \in \gf_q^*} \xi_3^{\tr(\beta^3 y^7 - \beta^3 u^3 y^5 - (\beta u^2 + \beta^3 u^9) y)} \chi(y) \\
  & = & \pm \sum_{z \in \gf_q^*} \xi_3^{\tr(z^7 - \beta^{3 + 7^{-1} \cdot 5 \cdot (-3)} u^3 z^5 - (\beta^{1 + 7^{-1} \cdot (-3)} u^2 + \beta^{3 + 7^{-1} \cdot (-3)} u^9 ) z ) } \chi (z),
\end{eqnarray*}
where $7^{-1}$ denotes the multiplicative inverse of $7$ modulo $q-1$.

Let $\gamma_u = - (\beta^{1 + 7^{-1} \cdot (-3)} u^2 + \beta^{3 + 7^{-1} \cdot (-3)} u^9 )$, and $\eta_u = - \beta^{3 + 7^{-1} \cdot 5 \cdot (-3)} u^3$. Clearly, $\eta_u \ne 0$. 

If $\gamma_u = 0$,  the equation (\ref{eqn-goal2}) becomes 
\begin{equation}\label{eqn-goal31}
  \sum_{z \in \gf_q^*} \xi_3^{\tr(z^7 + \eta_u z^5)} \chi(z)  \equiv 0 \pmod{3^{(m-1)/2}} .
\end{equation}

Using Fourier inversion (\ref{eqn-invf}), we have for each $z \in \gf_q^*$, $$
\xi_3^{\tr(z)} = \frac{1}{q-1} \sum_{b=0}^{q-2} g(\omega^{-b}) \omega^b(z),
$$
where $\omega$ is the Teichm{\"u}ller character on $\gf_q$. Then we get
\begin{eqnarray*}
  \lefteqn{\sum_{z \in \gf_q^*} \xi_3^{\tr( z^7 + \eta_u z^5)} \chi(z)} \\
  & = & \sum_{z \in \gf_q^*} \xi_3^{\tr(\eta_u z^5)} \chi(z) \frac{1}{q-1} \sum_{b=0}^{q-2} g(\omega^{-b}) \omega^b(z^7) \\
  & = & \sum_{z \in \gf_q^*} \xi_3^{\tr(\eta_u z^5)} \omega^{- \frac{q-1}{2} } (z)  \frac{1}{q-1} \sum_{b=0}^{q-2} g(\omega^{-b}) \omega^{7b}(z) \\
  & = & \frac{1}{q-1} \sum_{b=0}^{q-2} g(\omega^{-b}) \sum_{z \in \gf_q^*} \xi_3^{\tr(\eta_u z^5)} \omega^{- \frac{q-1}{2} + 7b} (z) \\
  & = & \frac{1}{q-1} \sum_{b=0}^{q-2} g(\omega^{-b}) \sum_{z^5 \in \gf_q^*} \xi_3^{\tr(\eta_u z^5)} \omega^{- \frac{q-1}{2} + 5^{-1} \cdot 7b} (z^5),
\end{eqnarray*}
where $5^{-1}$ is the multiplicative inverse of $5$ modulo $q-1$. Let $z = \eta_u^{- 5^{-1}} y$, then we have
\begin{eqnarray*}
  \lefteqn{\sum_{z \in \gf_q^*} \xi_3^{\tr( z^7 + \eta_u z^5)} \chi(z)} \\
  & = & \frac{1}{q-1} \sum_{b=0}^{q-2} g(\omega^{-b}) \sum_{\eta_u^{-1} y^5 \in \gf_q^*} \xi_3^{\tr(y^5)} \omega^{- \frac{q-1}{2} + 5^{-1} \cdot 7b} (y^5) \omega^{- \frac{q-1}{2} + 5^{-1} \cdot 7b} (\eta_u^{-1} )\\
  & = & \frac{1}{q-1} \sum_{b=0}^{q-2} g(\omega^{-b}) g(\omega^{- \frac{q-1}{2} + 5^{-1} \cdot 7b}) \omega^{- \frac{q-1}{2} + 5^{-1} \cdot 7b} (\eta_u^{-1}).
\end{eqnarray*}
Thus,
$$
S_\beta = \pm \frac{1}{q-1} \sum_{b=0}^{q-2} g(\omega^{-b}) g(\omega^{- \frac{q-1}{2} + 5^{-1} \cdot 7b}) \omega^{- \frac{q-1}{2} + 5^{-1} \cdot 7b} (\eta_u^{-1}).
$$

Fix each prime ideal ${\mathfrak p}$ in $\bbZ[\xi_{q-1}]$ lying over $3$ and let ${\mathfrak B}$ be the prime ideal of $\bbZ[\xi_{q-1}, \xi_3]$ lying over ${\mathfrak p}$. Since $v_{\mathfrak B} (3) = 2$, we have 
\begin{equation}\label{eqn-ob}
S_\beta \equiv 0 \pmod{3^{(m-1)/2}}  \iff  v_{\mathfrak B}(S_\beta) \ge m-1.
\end{equation}
It then follows that
\begin{eqnarray*}
  \lefteqn{S_\beta \equiv 0 \pmod{3^{(m-1)/2}}}\\
  & \iff &  v_{\mathfrak B} \left( \sum_{b=0}^{q-2} g(\omega^{-b}) g(\omega^{- \frac{q-1}{2} + 5^{-1} \cdot 7b}) \omega^{- \frac{q-1}{2} + 5^{-1} \cdot 7b} (\eta_u^{-1}) \right) \ge m - 1.
\end{eqnarray*}
By Theorem~\ref{thm-sti}, we have for each $b$ with $0 \le b \le q - 2$,
$$
v_{\mathfrak B} (g(\omega^{-b}) g(\omega^{- \frac{q-1}{2} + 5^{-1} \cdot 7b})) = w(b) + w \left( \frac{q-1}{2} - 5^{-1} \cdot 7b \right).
$$
Thus, to prove (\ref{eqn-goal31}), it suffices to prove that for each $b$ with $0 \le b \le q-2$,
\begin{equation}\label{eqn-goal41}
  w(b) + w \left( \frac{q-1}{2} - 5^{-1} \cdot 7b \right) \ge m - 1.
\end{equation}
We will prove the statement in Theorem~\ref{thm-goal41}. Since the proof of (\ref{eqn-goal41}) is lengthy, we put it in Appendix~\ref{sec-app}.

If $\gamma_u \ne 0$, we need to prove that
\begin{equation}\label{eqn-goal32}
  \sum_{z \in \gf_q^*} \xi_3^{\tr(z^7 + \eta_u z^5 + \gamma_u z)} \chi(z) \equiv 0 \pmod{3^{(m-1)/2}}.
\end{equation}

Similarly, using Fourier inversion (\ref{eqn-invf}), we have 
\begin{eqnarray*}
  \lefteqn{\sum_{z \in \gf_q^*} \xi_3^{\tr(z^7 + \eta_u z^5 + \gamma_u z)} \chi (z)} \\
  & = & \sum_{z \in \gf_q^*} \xi_3^{\tr(\eta_u z^5 + \gamma_u z)} \omega^{- \frac{q-1}{2}} (z) \frac{1}{q-1} \sum_{b_1 = 0}^{q-2} g(\omega^{-b_1}) \omega^{b_1} (z^7) \\
  & = & \frac{1}{q-1} \sum_{b_1 = 0}^{q-2} g(\omega^{- b_1}) \sum_{z \in \gf_q^*} \xi_3^{\tr(\eta_u z^5 + \gamma_u z)} \omega^{- \frac{q-1}{2} + 7 b_1} (z) \\
  & = & \frac{1}{q-1} \sum_{b_1 = 0}^{q-2} g(\omega^{-b_1}) \sum_{y \in \gf_q^*} \xi_3^{\tr(y^5 + \gamma_u \eta_u^{- 5^{-1}} y)} \omega^{- \frac{q-1}{2} + 7b_1}(\eta_u^{-5^{-1}}y) \\
  & = & \frac{1}{q-1} \sum_{b_1 = 0}^{q-2} g(\omega^{-b_1}) \sum_{y \in \gf_q^*} \xi_3^{\tr(\gamma_u \eta_u^{-5^{-1}} y)} \omega^{- \frac{q-1}{2} + 7 b_1} (\eta_u^{-5^{-1}} y ) \\
  & & \qquad \cdot \frac{1}{q-1} \sum_{b_2 = 0}^{q-2} g(\omega^{- b_2}) \omega^{b_2}(y^5) \\
  & = & \frac{1}{(q-1)^2} \sum_{b_1 = 0}^{q-2} g(\omega^{-b_1}) \sum_{b_2 = 0}^{q-2} g(\omega^{-b_2}) \omega^{- \frac{q-1}{2} + 7 b_1} (\eta_u^{-5^{-1}}) \\
  & & \qquad  \cdot \sum_{y \in \gf_q^*} \xi_3^{\tr(\gamma_u \eta_u^{- 5^{-1}} y)} \omega^{- \frac{q-1}{2} + 7 b_1 + 5 b_2} (y) \\
  & = & \frac{1}{(q-1)^2} \sum_{b_1 = 0}^{q-2} \sum_{b_2 = 0}^{q-2} g(\omega^{-b_1}) g(\omega^{-b_2}) g(\omega^{- \frac{q-1}{2} + 7 b_1 + 5 b_2}) \\
  & & \qquad \cdot \omega^{- \frac{q-1}{2} + 7 b_1}( \eta_u^{- 5^{-1}}) \omega^{- \frac{q-1}{2} + 7 b_1 + 5 b_2 } ( \gamma_u^{-1} \eta_u^{5^{-1}}) .
\end{eqnarray*}

By (\ref{eqn-ob}), we have
\begin{eqnarray*}
  \lefteqn{S_\beta \equiv 0 \pmod{3^{(m-1)/2}}} \\
  & \iff & v_{\mathfrak B} \bigg( \sum_{b_1 = 0}^{q-2} \sum_{b_2 = 0}^{q-2} g(\omega^{-b_1}) g(\omega^{-b_2}) g(\omega^{- \frac{q-1}{2} + 7 b_1 + 5 b_2}) \\
  & & \qquad \cdot \quad \omega^{- \frac{q-1}{2} + 7 b_1}( \eta_u^{- 5^{-1}}) \omega^{- \frac{q-1}{2} + 7 b_1 + 5 b_2 } ( \gamma_u^{-1} \eta_u^{5^{-1}}) \bigg) \ge m - 1.
\end{eqnarray*}
From Theorem~\ref{thm-sti}, it then follows that for all $b_1$, $b_2$, with $0 \le b_1 \le q-2$ and $0 \le b_2 \le q-2$, 
\begin{eqnarray*}
  \lefteqn{v_{\mathfrak B} \left( \sum_{b_1 = 0}^{q-2} \sum_{b_2 = 0}^{q-2} g(\omega^{-b_1}) g(\omega^{-b_2}) g(\omega^{- \frac{q-1}{2} + 7 b_1 + 5 b_2})\right)} \\
  & = &  w(b_1) + w(b_2) + w \left(\frac{q-1}{2} - 7b_1 - 5b_2\right).
\end{eqnarray*}
Therefore, in order to prove (\ref{eqn-goal32}), we need to show that for all $b_1$, $b_2$, with $0 \le b_1 \le q-2$ and $0 \le b_2 \le q-2$, 
\begin{equation}\label{eqn-goal42}
  w(b_1) + w(b_2) + w \left( \frac{q-1}{2} - 7 b_1 - 5b_2 \right) \ge m- 1.
\end{equation}
We will prove (\ref{eqn-goal42}) in Theorem~\ref{thm-goal42}, which is also put in Appendix~\ref{sec-app}.

Combining Theorem~\ref{thm-goal41}, Theorem~\ref{thm-goal42}, and Lemma~\ref{lem-is}, the conclusion follows.
\qed
\end{proof}

\subsection{Inequivalence of skew Hadamard difference sets}\label{sec-equi}

In this section, we discuss the equivalence relations of skew Hadamard difference sets. In~\cite{DWX07}, all the known families of skew Hadamard difference sets in $(\gf_{3^m},+)$ with $m$ odd were summarized and the inequivalence relations were verified by computer for $m = 5, 7$. For the skew Hadamard difference sets constructed from the first kind of Dickson polynomials of order $7$, we first have the following result, similar to both the Ding-Yuan skew Hadamard difference sets in~\cite{DY06} and the skew
Hadamard difference sets from Ree-Tits permutation polynomials in~\cite{DWX07}. 

\begin{theorem}\label{thm-dp7}
  Let $u \in \gf_q^*$. The skew Hadamard difference sets $D_u$ in $(\gf_q, +)$ constructed in Theorem~\ref{thm-main} are equivalent to either of the following:
  \begin{itemize}
    \item[(i)] the difference set $D_1 = \{ \calD_7(x^2, 1) | x \in \gf_q^* \}$;
    \item[(ii)] the difference set $D_{-1}= \{ \calD_7(x^2, -1) | x \in \gf_q^* \}$,
  \end{itemize}
  where $\calD_7(x,u)$ denotes the first kind Dickson polynomial of order $7$.
\end{theorem}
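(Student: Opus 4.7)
The plan is to exploit the standard scaling identity for the first kind of Dickson polynomials, namely $\calD_n(\lambda x, \lambda^2 u) = \lambda^n \calD_n(x, u)$, which is immediate from the functional equation $\calD_n(y + u/y, u) = y^n + u^n/y^n$ by substituting $y \mapsto \lambda y$ and $u \mapsto \lambda^2 u$. Applied with $n = 7$, $\lambda = \mu^2$, and $x$ replaced by $x^2$, this yields
\[
\calD_7((\mu x)^2, \mu^4 u) = \mu^{14} \calD_7(x^2, u).
\]
As $x$ runs over $\gf_q^*$ so does $\mu x$, hence the image sets satisfy $D_{\mu^4 u} = \mu^{14} D_u$ for every $\mu \in \gf_q^*$. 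Since multiplication by the nonzero element $\mu^{14}$ is an automorphism of the additive group $(\gf_q, +)$, this already shows that $D_u$ and $D_{\mu^4 u}$ are equivalent difference sets in the sense of the paper (with trivial translation $b = 0$).

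The next step is to identify the quotient $\gf_q^* / (\gf_q^*)^4$. Because $m$ is odd we have $q = 3^m \equiv 3 \pmod{4}$, so $\gcd(4, q-1) = 2$, which forces $(\gf_q^*)^4$ to coincide with the subgroup of squares and to have index $2$ in the cyclic group $\gf_q^*$. Since $-1$ is a nonsquare when $q \equiv 3 \pmod{4}$, the pair $\{1, -1\}$ is a complete set of coset representatives for $(\gf_q^*)^4$ in $\gf_q^*$. Consequently every $u \in \gf_q^*$ can be written as $u = \mu^4$ or $u = -\mu^4$ for some $\mu \in \gf_q^*$, and the identity $D_{\mu^4 u_0} = \mu^{14} D_{u_0}$ from the previous paragraph, applied with $u_0 = 1$ or $u_0 = -1$, gives the two cases in the statement.

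I do not foresee any real obstacle. The only mildly delicate check is that multiplication by the scalar $\mu^{14}$ is a genuine group automorphism of $(\gf_q, +)$, which is immediate since $\mu^{14} \in \gf_q^*$. An essentially parallel argument already appears in~\cite{DY06} for order $5$ and in~\cite{DWX07} for the Ree-Tits permutation, so the present case is a straightforward adaptation with the exponents $14 = 2 \cdot 7$ and $4 = \gcd$-inducing exponent playing the role of $10 = 2 \cdot 5$ in the order-$5$ case.
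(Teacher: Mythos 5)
Your proposal is correct and rests on the same scaling identity $\lambda^{7}\calC \text{(scaling)}$ -- more precisely, the identity $\lambda^{7}\calD_7(x,u)=\calD_7(\lambda x,\lambda^{2}u)$ -- that the paper uses, so it is essentially the paper's argument. The only cosmetic difference is that you scale by squares $\lambda=\mu^{2}$ and invoke $(\gf_q^*)^{4}=(\gf_q^*)^{2}$ (valid since $q\equiv 3\pmod 4$), which neatly sidesteps the paper's case split on $b$ square versus nonsquare and the resulting sign $D_{b^2}=\pm b^{7}D_1$ coming from the oddness of $\calD_7$ in $x$.
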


\begin{proof}
Note that $\calD_7(x,u) = x^7 - u x^5 - u^2 x^3 - u^3 x$. For each $b \in \gf_q^*$, we have 
\begin{equation}\label{eqn-dpequi}
b^7 \calD_7(x,u) = b^7 x^7 - ub^7 x^5 - u^2 b^7 x^3 - u^3 b^7 x = \calD_7(bx, ub^2).
\end{equation}

Setting $u = 1$ in (\ref{eqn-dpequi}), we have 
$$
b^7 \calD_7(x^2, 1) = \calD_7(bx^2, b^2).
$$
Thus, it is easily seen that $D_{b^2} = b^7 D_1$ if $b$ is a square in $\gf_q^*$, while $D_{b^2} = - b^7 D_1$ if $b$ is a nonsquare. Thus, for every square $u \in \gf_q^*$, $D_u$ is equivalent to $D_1$.

With similar argument, we can also prove that for every nonsquare $u \in \gf_q^*$, $D_u$ is equivalent to $D_{-1}$.
\qed
\end{proof}

In general, it is difficult to distinguish inequivalent skew Hadamard difference sets~\cite{DY06,DWX07}. In~\cite{DY06,DWX07}, two different techniques were used to verified the inequivalence relations by computer, respectively. Here we use the method in~\cite{Bau71,DWX07} to determine the inequivalence relations for $m= 5, 7$. In the sequel, we use $\p$ to denote the classical Paley difference set, $\dy(1)$ and $\dy(-1)$ to denote the two classes of skew Hadamard difference sets
in~\cite{DY06}, $\rt(1)$ and $\rt(-1)$ to denote the two classes of skew Hadamard difference sets in~\cite{DWX07}, respectively. 

For a difference set $D$ in $(\gf_q, +)$, define
$$
{\rm T} \{a,b\} := | D \cap (D+ a) \cap (D+ b)|,
$$
where $a, b \in \gf_q^*$. These numbers ${\rm T} \{a,b\}$ are called the {\em triple intersection numbers}. It is clear that two equivalent difference sets have the identical distribution of the triple intersection numbers. 

When $m=5$, the distributions of the triple intersection numbers of these difference sets in $(\gf_{3^m}, +)$ are summarized in the following table. Note that the exponents denote the multiplicities of the corresponding triple intersection numbers.
\begin{table}[h]
  \begin{center}
\begin{tabular}{lc} \hline
   DS & Triple intersection numbers with multiplicities  \\
   \hline
  $\p$ & $26^{1815} 27^{3630} 28^{1815} 29^{7260} \cdots 33^{1815}$ \\
  $\dy(1)$ &  $23^{15} 24^{30} 25^{285} 26^{1245} \cdots 35^{45}$ \\
  $\dy(-1)$ & $24^{75} 25^{435} 26^{1155} 27^{2385} \cdots 35^{120}$  \\
  $\rt(1)$ & $24^{75} 25^{330} 26^{1155} 27^{2535} \cdots 35^{105}$  \\
  $\rt(-1)$ & $24^{90} 25^{330} 26^{1095} 27^{2655} \cdots 35^{120}$ \\
  $D_1$ & $23^{30} 24^{60} 25^{390} 26^{1110} \cdots 36^{45}$  \\
  $D_{-1}$ & $23^{15} 24^{75} 25^{330} 26^{1005} \cdots 36^{15}$ \\
  \hline
\end{tabular}
\end{center}
\end{table}

It is easily checked that the distributions of all these $7$ classes of skew Hadamard difference sets are pairwise distinct. Therefore, we conclude that all these $7$ classes of skew Hadamard difference sets are pairwise inequivalent for $m = 5$.

When $m = 7$, we only need to check the maximum and the minimum triple intersection numbers of these difference sets in $(\gf_{3^m}, +)$, as listed in the following table.
\begin{table}[h]
  \begin{center}
\begin{tabular}{lcc} \hline
   DS & MIN & MAX  \\
   \hline
  $\p$ & $261$ & $284$ \\
  $\dy(1)$ & $246$ & $300$ \\
  $\dy(-1)$ & $248$ & $297$  \\
  $\rt(1)$ & $250$  & $295$ \\
  $\rt(-1)$ & $249$ & $296$ \\
  $D_1$ & $244$ & $301$  \\
  $D_{-1}$ & $246$ & $299$ \\
  \hline
\end{tabular}
\end{center}
\end{table}

Thus, for $m = 7$, all these $7$ classes of skew Hadamard difference sets are also pairwise inequivalent. Consequently, we make the following conjecture.

\begin{conjecture}\label{con-equi}
  For all odd $m > 7$ with $m \not\equiv 0 \pmod{3}$, the seven skew Hadamard difference sets $\p$, $\dy(1)$, $\dy(-1)$, $\rt(1)$, $\rt(-1)$, $D_1$ and $D_{-1}$ are pairwise inequivalent. 
\end{conjecture}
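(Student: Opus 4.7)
The plan is to attack the conjecture via invariants preserved under equivalence of difference sets: multiplier groups, character values modulo powers of $3$, and ultimately triple intersection number distributions, extending the ad hoc tabulations for $m=5,7$ to a uniform argument.

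First I would separate the classical Paley set $\p$ from the other six families. Since $\p$ consists of the nonzero squares in $\gf_q$, every nonzero square is a multiplier of $\p$, so its multiplier group has order at least $(q-1)/2$. For each of the remaining six families I would bound the multiplier group strictly below this threshold, either by computing the stabilizer of a cleverly chosen small test subset or by exploiting the polynomial origin of the sets. Independently, the triple intersection spectrum of $\p$ is supported on relatively few values (those controlled by cyclotomic numbers of order $2$), whereas the tables for $m=5,7$ show that the Dickson- and Ree--Tits-based spectra are visibly more spread; making this dichotomy quantitative for all admissible $m$ would separate $\p$ from the rest.

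Second, I would handle the three pairings $\dy(1)$ vs $\dy(-1)$, $\rt(1)$ vs $\rt(-1)$, and $D_1$ vs $D_{-1}$. For $D_{\pm 1}$ the identity $b^7 \calD_7(x,u) = \calD_7(bx, ub^2)$ from (\ref{eqn-dpequi}) pins down exactly which monomial multipliers send $D_1$ to $\pm D_{-1}$; combining this with an analysis of additive translates reduces the equivalence question to an explicit congruence condition on $m$ governing whether certain nonsquares are seventh powers in $\gf_q^*$. The analogous arguments for $\dy(\pm 1)$ and $\rt(\pm 1)$ already appear in~\cite{DY06,DWX07} and can be transplanted almost verbatim.

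The hardest step will be separating the three families $\dy$, $\rt$, $D$ from each other for general odd $m$. Here I would push the Stickelberger calculation in the proof of Theorem~\ref{thm-main} beyond a mere congruence modulo $3^{(m-1)/2}$ and extract the exact $\mathfrak{B}$-adic valuation of $S_\beta$ together with its leading Teichm\"uller coefficient. Because the three constructions correspond to exponent sums of genuinely different shapes (Dickson polynomials of orders $5$ and $7$, and the Ree--Tits permutation polynomial), these leading coefficients should disagree, which would force distinct triple intersection spectra. The main obstacle is the combinatorics of base-$3$ carry digits controlling when the Stickelberger bounds (\ref{eqn-goal41}) and (\ref{eqn-goal42}) are tight: without a new structural idea, each pair of families probably has to be compared separately, and obtaining a uniform-in-$m$ separation of $\dy(1)$ from $\rt(1)$ or of $D_1$ from $\rt(1)$ appears to be the most delicate piece.
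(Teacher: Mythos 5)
The statement you are addressing is not proved in the paper at all: it is stated as Conjecture~\ref{con-equi}, and the authors' only evidence is a computer verification of the triple intersection number distributions for $m=5$ and $m=7$. So there is no ``paper proof'' to match, and your text is likewise not a proof but a research programme; as written it leaves every essential step open. The separation of $\p$ from the other six families is asserted via a hoped-for quantitative dichotomy of triple intersection spectra and an unproved bound on multiplier groups, with no argument supplied for either; and the decisive step, separating $\dy$, $\rt$ and $D$ from one another for all admissible $m$, rests on the expectation that leading Teichm\"uller coefficients in a refined Stickelberger analysis ``should disagree,'' which you yourself flag as the unresolved obstacle. Moreover, even granting such a refinement of $S_\beta$, you have not explained how a statement about additive character values (which are invariants of the difference set only up to the action you control) would force the \emph{triple} intersection spectra to differ; that implication is not automatic and is precisely where the difficulty lies.

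There is also a concrete flaw in your second step. Equivalence here means $\alpha(D_1)=D_2+b$ for an arbitrary automorphism $\alpha$ of $(\gf_q,+)$, i.e.\ any element of ${\rm GL}(m,3)$ together with a translation, not merely multiplication by field elements or monomial maps. The identity $b^7\calD_7(x,u)=\calD_7(bx,ub^2)$ from~(\ref{eqn-dpequi}) only tells you which \emph{multiplications} carry $D_1$ to $\pm D_{-1}$ (this is exactly how Theorem~\ref{thm-dp7} reduces all $D_u$ to $D_1$ or $D_{-1}$); it cannot reduce the inequivalence of $D_1$ and $D_{-1}$ to a congruence about seventh powers, because a non-monomial $\gf_3$-linear map could still identify them. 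This is the reason the authors (and \cite{DY06,DWX07} before them) resort to computing equivalence invariants such as triple intersection numbers case by case, and why the general statement remains a genuinely open conjecture rather than something obtainable by transplanting those arguments.
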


\section{Conclusions}\label{sec-con}

Using the first kind of the Dickson polynomials $\calD_7(x,u)$, we showed that for all $u \in \gf_q^*$, $D_u$, the image set of $\calD_7(x^2, u)$ is a skew Hadamard difference set in $(\gf_{3^m}, +)$, where $m$ is odd and $m \not \equiv 0 \pmod{3}$. Furthermore we proved that every such skew Hadamard difference set is equivalent to either $D_1$ or $D_{-1}$. By comparing the triple intersection numbers, with the help of computer, we verified that both of the two skew Hadamard difference sets in
$(\gf_{3^m},+)$ are inequivalent to all existing ones with the same parameters for $m = 5, 7$. To conclude the paper, we would like to make the following remarks. 
\begin{itemize}
  \item[(1)] Based on the numerical results (up to $m = 19$), it seems impossible to obtain new skew Hadamard difference sets using the Dickson polynomials of higher order.
  \item[(2)] We emphasize that this is the second family of skew Hadamard difference sets using permutation polynomials (the first one in~\cite{DWX07}). Both of the two examples still look mysterious, and it is more interesting to characterize such permutation polynomials that can be used to construct skew Hadamard difference sets. On the other hand, because of the direct relation between skew Hadamard difference sets and even planar functions~\cite{WQWX07}, one may ask whether the skew Hadamard difference sets constructed in this paper are in fact the image sets of certain new planar functions. 
  \item[(3)] Since when $m$ is even and $m \not\equiv 0 \bmod{3}$, the first kind of Dickson polynomials of order $7$ are still permutation polynomials, it is of interest to consider the image set of $\calD_7(x^2,u)$ in $(\gf_{3^m},+)$ where $m$ is even. With the help of magma~\cite{BCP97}, we verified that the image set of $\calD_7(x^2,u)$ is a Paley type partial difference set in $(\gf_{3^m}, +)$ for $m = 4, 8$. However, the proof is still missing.
  \item[(4)] We also mention that Muzychuk~\cite{Muzy10} gave a powerful construction of skew Hadamard difference sets in elementary abelian groups of order $q^3$ with the prime power $q \equiv 3 \bmod{4}$, in comparison that the skew Hadamard difference sets constructed here are in $(\gf_{3^m}, +)$ with $m$ odd.  
\end{itemize}

\begin{acknowledgements}
Cunsheng Ding's research is supported by the Hong Kong Research Grants Council under project no. 601311. Qi Wang's research is supported by the Alexander von Humboldt (AvH) Stiftung/Foundation.
\end{acknowledgements}

\appendix
\section{Appendix}\label{sec-app}

In this appendix, we prove both (\ref{eqn-goal41}) and (\ref{eqn-goal42}). Let $q = 3^m$ with $m$ odd and $m \not \equiv 0 \pmod{3}$. In both two proofs, we will always use the following theorem~\cite{HHKWX09}.

\begin{theorem}{\rm \cite[Theorem 4.1]{HHKWX09}}\label{thm-carry}
  Let $a^{(1)}, a^{(2)}, \ldots, a^{(n)}$ be $n$ integers, and let the integer $s$ satisfy
  $$
  s \equiv l_1 a^{(1)} + l_2 a^{(2)} + \cdots + l_n a^{(n)} \pmod{p^m - 1},
  $$
  for some nonzero integers $l_1, l_2, \ldots, l_n$. Suppose that $s$ and $a^{(1)}, a^{(2)}, \ldots, a^{(n)}$ have $p$-ary representations $s = \sum_{i=1}^{m-1} s_i p^i$ and $a^{(j)} = \sum_{i=0}^{m-1} a_i^{(j)} p^i$ for $j = 1, 2, \ldots, n$, where the $p$-ary digits $s_i$ and all $a_i^{(j)}$'s are integers in $\{0,1, \ldots, p-1\}$. Then there exists a unique integer sequence $c = c_{-1}, c_0, \ldots, c_{m-1}$ with $c_{-1} = c_{m-1}$ such that
  $$
  p c_i + s_i = c_{i-1} + \sum_{j=1}^n l_j a_i^{(j)} \quad (0 \leq i \leq m-1).
  $$
  Moreover, if we define
  $$
  l_+ = \sum_{\begin{array}{c}j=1 \\ l_j>0 \end{array}}^n l_j, \qquad l_- = \sum_{\begin{array}{c}j=1 \\ l_j < 0 \end{array}}^n l_j,
  $$
  then $l_- - 1 \leq c_i \leq l_+$, and furthermore
  $$
  l_- \leq c_i \leq l_+ - 1,
  $$
  for $i = 0, 1, \ldots, m-1$ provided that $a^{(j)} \not \equiv 0 \bmod{p^m - 1}$ for some $j = 1, \ldots, n$.
\end{theorem}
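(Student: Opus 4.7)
The plan is to recognise the relation $p c_i + s_i = c_{i-1} + \sum_j l_j a_i^{(j)}$ as the $i$-th digit step of schoolbook base-$p$ addition with carries, wrapped cyclically because $p^m \equiv 1 \pmod{p^m - 1}$. Write $T_i := \sum_{j=1}^n l_j a_i^{(j)}$ for the column sum in position $i$ and $N := \sum_j l_j a^{(j)}$; expanding the $p$-ary representations gives $N = \sum_{i=0}^{m-1} T_i p^i$, and the congruence $s \equiv N \pmod{p^m - 1}$ produces a unique integer $K$ with $N = s + K(p^m - 1)$.

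For existence and uniqueness, the relation $c_i = (c_{i-1} + T_i - s_i)/p$ determines $c_i$ from $c_{i-1}$ once the digits $s_i$ of $s$ are fixed. Multiplying by $p^i$ and telescoping over $i = 0, \ldots, m-1$ yields $s + p^m c_{m-1} = c_{-1} + N$; substituting $N = s + K(p^m - 1)$ reduces to $c_{m-1} = K + (c_{-1} - K)/p^m$, so the cycle condition $c_{-1} = c_{m-1}$ forces $c_{-1} = K$, and conversely that choice makes the whole sequence integral and closes the loop. The coarse bounds $l_- - 1 \le c_i \le l_+$ then follow by induction: each $a_i^{(j)} \in \{0, \ldots, p-1\}$ yields $l_-(p-1) \le T_i \le l_+(p-1)$, so if the hypothesis holds at position $i-1$ then $c_{i-1} + T_i - s_i$ lies in $[(l_- - 1)p, l_+ p]$, and division by $p$ preserves the interval.

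The main obstacle is the strict version $l_- \le c_i \le l_+ - 1$, which admits no purely local proof. The key observation is that extremal carries propagate: if $c_{i_0} = l_+$ at a single index, then $p l_+ + s_{i_0} = c_{i_0 - 1} + T_{i_0}$ together with $c_{i_0 - 1} \le l_+$ and $T_{i_0} \le l_+(p-1)$ saturates all three inequalities, forcing $c_{i_0 - 1} = l_+$, $T_{i_0} = l_+(p-1)$, and $s_{i_0} = 0$. Iterating backwards around the cyclic index set gives $c_i = l_+$ and $T_i = l_+(p-1)$ for \emph{every} $i$. But $T_i = l_+(p-1)$ is achieved only when $a_i^{(j)} = p - 1$ for all $j$ with $l_j > 0$ and $a_i^{(j)} = 0$ for all $j$ with $l_j < 0$, uniformly in $i$, so each $a^{(j)}$ equals either $p^m - 1$ or $0$, hence $a^{(j)} \equiv 0 \pmod{p^m - 1}$, contradicting the hypothesis. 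A mirror argument excludes $c_i = l_- - 1$. The conceptual difficulty is precisely that local inspection cannot rule out an extremal carry; only the cyclic closure $c_{-1} = c_{m-1}$ propagates extremality around the full loop and collapses every $a^{(j)}$ to a trivial residue class.
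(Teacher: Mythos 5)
The paper does not prove this statement at all: it is quoted verbatim from \cite[Theorem~4.1]{HHKWX09} and used as a black box, so there is no internal proof to compare against. Judged on its own, your argument follows the natural ``modular add-with-carry'' route and its core ideas are sound: pinning $c_{-1}=K=(N-s)/(p^m-1)$ via the telescoped identity $s+p^mc_{m-1}=c_{-1}+N$ gives existence and uniqueness, and your extremal-propagation argument for the sharp bounds is correct --- if $c_{i_0}=l_+$ then $pl_++s_{i_0}=c_{i_0-1}+T_{i_0}\le l_++l_+(p-1)$ forces saturation, the cyclic closure pushes $c_i=l_+$, $T_i=l_+(p-1)$, $s_i=0$ around the whole loop, and $T_i=l_+(p-1)$ for every $i$ collapses each $a^{(j)}$ to $p^m-1$ or $0$, contradicting $a^{(j)}\not\equiv 0\pmod{p^m-1}$; the mirror case $c_i=l_--1$ is symmetric.

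Two steps are asserted rather than proved, and you should close them. First, integrality of the sequence started at $c_{-1}=K$: from $p^{i+1}c_i=c_{-1}+\sum_{k=0}^{i}(T_k-s_k)p^k$ and $\sum_{k=0}^{m-1}(T_k-s_k)p^k=K(p^m-1)$ one gets $c_{-1}+\sum_{k=0}^{i}(T_k-s_k)p^k=Kp^m-\sum_{k=i+1}^{m-1}(T_k-s_k)p^k$, which is visibly divisible by $p^{i+1}$; without this line the ``conversely that choice makes the whole sequence integral'' is unsupported. Second, your induction for the coarse bound $l_--1\le c_i\le l_+$ has no anchor: on a cyclic index set, showing that the interval is preserved from position $i-1$ to position $i$ proves nothing unless some $c_i$ is already known to lie in it. The fix is immediate from your own construction --- $l_-(p^m-1)\le N\le l_+(p^m-1)$ and $0\le s\le p^m-1$ give $c_{-1}=K\in[l_--1,l_+]$ --- or, alternatively, one can bound the maximal and minimal carry directly (if $c_{i_0}$ is maximal, $pc_{i_0}+s_{i_0}\le c_{i_0}+l_+(p-1)$ yields $c_{i_0}\le l_+$, and dually for the minimum), which avoids any base case. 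With these one-line repairs the proof is complete and is a legitimate self-contained substitute for the cited result.
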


\begin{theorem}\label{thm-goal41}
For each $a$, $0 \le a \le q-2$, we have
\begin{equation}\label{eqn-dsum1}
  w(a) + w \left( \frac{q-1}{2} - 5^{-1} \cdot 7 a \right) \ge m,
\end{equation}
where $w(a)$ is the digit sum of $a$ defined in Theorem~\ref{thm-sti}.
\end{theorem}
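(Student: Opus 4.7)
The plan is to reduce the digit-sum bound to an inequality about carry sums via the HHKWX carry theorem (Theorem~\ref{thm-carry}), and then to establish that carry bound by an amortized argument with a discrete potential function.

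Setting $s := \frac{q-1}{2} - 5^{-1} \cdot 7 a \pmod{q-1}$ and using that $5 \cdot \frac{q-1}{2} \equiv 7 \cdot \frac{q-1}{2} \equiv \frac{q-1}{2} \pmod{q-1}$ (because $5$ and $7$ are odd while $q-1$ is even), the relation defining $s$ is equivalent to $5s + 7a \equiv \frac{q-1}{2} \pmod{q-1}$. I apply Theorem~\ref{thm-carry} with $p = 3$, target $\sigma = \frac{q-1}{2}$ (whose base-$3$ digits are identically $1$), $a^{(1)} = s$, $a^{(2)} = a$, $l_1 = 5$, $l_2 = 7$ (so $l_+ = 12$ and $l_- = 0$). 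Since at least one of $s, a$ is nonzero modulo $q-1$, I obtain a unique carry sequence $c_{-1}, c_0, \ldots, c_{m-1}$ with $c_{-1} = c_{m-1}$, each $c_i \in \{0, 1, \ldots, 11\}$, satisfying
$$3 c_i + 1 = c_{i-1} + 5 s_i + 7 a_i, \qquad 0 \le i \le m - 1.$$
Summing and telescoping via $c_{-1} = c_{m-1}$ gives $5\, w(s) + 7\, w(a) = m + 2 \sum_{i=0}^{m-1} c_i$, so the desired inequality $w(a) + w(s) \ge m$ is equivalent to the carry bound $\sum_{i=0}^{m-1} c_i \ge 2m + w(a)$.

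To establish this carry bound, I would design a potential function $\phi: \{0, 1, \ldots, 11\} \to \bbZ$ satisfying the local amortized inequality
$$c - 2 - a + \phi(c) - \phi(c') \ge 0$$
for every admissible transition, that is, every quadruple $(c, c', s, a)$ with $c, c' \in \{0, \ldots, 11\}$, $(s, a) \in \{0, 1, 2\}^2$, and $c' = 3 c + 1 - 5 s - 7 a$. A working choice is $\phi = (2, 0, 3, 1, -1, 2, 0, 0, -1, 0, 0, 0)$ indexed by $c = 0, 1, \ldots, 11$. Since there are only finitely many admissible transitions (at most about thirty, after imposing $0 \le c' \le 11$ and $(s, a) \in \{0, 1, 2\}^2$), verifying the local inequality is a finite case check. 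Applying it at $(c, c', s, a) = (c_i, c_{i-1}, s_i, a_i)$ for each $i$ and summing, the telescoping sum $\sum_i (\phi(c_i) - \phi(c_{i-1})) = 0$ by cyclicity of the carry sequence, and the bound $\sum c_i \ge 2m + w(a)$ drops out.

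The main obstacle is the discovery of a suitable $\phi$. Several transitions force tight constraints: for instance $\phi(0) - \phi(1) \ge 2$ from the quadruple $(0, 1, 0, 0)$, and $\phi(2) - \phi(0) \ge 1$ from $(2, 0, 0, 1)$. No affine $\phi$ simultaneously meets all such constraints, so the values must be chosen ad hoc, guided by the tight transitions. An alternative would be to bypass $\phi$ in favour of a long direct case analysis on base-$3$ digit patterns and carry evolution, but the potential-function packaging crystallises the combinatorial content of the argument into a single local inequality.
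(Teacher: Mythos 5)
Your argument is correct, and it reaches the goal by a genuinely different route than the paper. Your reduction is sound: with $s \equiv \frac{q-1}{2} - 5^{-1}\cdot 7a \pmod{q-1}$ one indeed has $5s + 7a \equiv \frac{q-1}{2}$, Theorem~\ref{thm-carry} applies with $l_1=5$, $l_2=7$ (and the nondegeneracy hypothesis holds since $a=0$ forces $s=\frac{q-1}{2}\neq 0$), the telescoped identity $5w(s)+7w(a) = m + 2\sum_i c_i$ is right, and so is the equivalence of the theorem with $\sum_i c_i \ge 2m + w(a)$. I enumerated the admissible transitions $(c,c',s,a)$ with $c'=3c+1-5s-7a$ and $c,c'\in\{0,\dots,11\}$ (there are $36$ of them) and checked your potential $\phi=(2,0,3,1,-1,2,0,0,-1,0,0,0)$: the local inequality $c-2-a+\phi(c)-\phi(c')\ge 0$ holds in every case, with equality exactly on the transitions that govern the extremal instances (e.g.\ the loops at $c=2$ with $(s,a)=(1,0)$ and at $c=3$ with $(s,a)=(0,1)$, corresponding to $a=0$ and $a=\frac{q-1}{2}$), so your proof is complete. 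The paper proceeds differently: it clears the inverse by replacing $a$ with $5a$ (using $\gcd(5,q-1)=1$) and proves $w(5a)+w(\frac{q-1}{2}-7a)\ge m$, expressing both terms through digitwise formulas $b_i = 5+a_{i-1}-a_{i-2}-a_i$ and $d_i=4+a_{i-2}-a_{i-1}-a_i$ driven by the digits of $a$ alone, with two small carry sequences $c_i\in\{0,\dots,3\}$ and $e_i\in\{0,1,2\}$, and then establishes the pointwise bound $a_i+c_i+e_i\le 4$ by a multi-case analysis that looks back at earlier digits. Your single-congruence formulation, with one carry sequence in $\{0,\dots,11\}$ and an amortized potential, trades that hand case analysis for a mechanical finite verification (equivalently, the absence of negative cycles in a small weighted transition digraph), makes the equality cases transparent, and in fact proves the digit-sum bound for any $m$ for which $5^{-1}$ exists; what the paper's pointwise approach buys is that its case-analysis machinery is reused, in strengthened form (Lemmas~\ref{lem-ci1}--\ref{lem-ci5}), for Theorem~\ref{thm-goal42}, where no pointwise bound holds and segment estimates are needed.
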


\begin{proof}
  Since $m$ is odd, we have $\gcd(q-1, 5) = 1$. Thus, it is equivalent to prove
  \begin{equation}\label{eqn-dsum12}
    w(5a) + w \left( \frac{q-1}{2} - 7 a \right) \ge m.
  \end{equation} 

  For each $0 \leq a \leq q-2$, suppose that $a$ has the ternary representation $a = \sum_{i=0}^{m-1} a_i 3^i$ with $a_i \in \{0, 1, 2 \}$. We extend $a_0, a_1, \ldots, a_{m-1}$ to a periodic ternary sequence with period $m$, i.e., $a_i = a_j$ whenever $i \equiv j \pmod{m}$. We then have
  \begin{eqnarray*}
    \frac{q-1}{2} - 7a & = & \frac{q-1}{2} - 3^2 a + 3 a - a \\
    & = & \sum_{i=0}^{m-1} ( 1 - a_{i-2} + a_{i-1} - a_i ) 3^i \pmod{3^m - 1} \\
    & = & \sum_{i=0}^{m-1} ( 1 + (2 - a_{i-2}) + a_{i-1} + (2 - a_i)) 3^i \pmod{3^m - 1} \\
    & = & \sum_{i=0}^{m-1} ( 5 + a_{i-1} - a_{i-2} - a_i) 3^i \pmod{3^m - 1}.
  \end{eqnarray*}
For each $i$, let
\begin{equation}\label{eqn-b1}
b_i = 5 + a_{i-1} - a_{i-2} - a_i,
\end{equation}
and by Theorem~\ref{thm-carry}, there exists a unique sequence $\{c_i\}$ such that 
\begin{equation}\label{eqn-s1}
s_i = b_i - 3 c_i + c_{i-1},
\end{equation}
with $s_i \in \{0, 1, 2\}$, where $b_i \in \{1, 2, \ldots, 7\}$ and $c_i \in \{0, 1, 2, 3\}$ is the carry from the $i$th digit to the $(i+1)$th digit in the modular summation of $\frac{q-1}{2}$, $-3^2 a$, $3a$ and $-a$. Similarly, for $5a$, we have 
$$
5a = \sum_{i=0}^{m-1} ( 4 + a_{i-2}  - a_{i-1} - a_i ) 3^i \pmod{3^m - 1}.
$$
For each $i$, let
\begin{equation}\label{eqn-d1}
  d_i = 4 + a_{i-2} - a_{i-1} - a_i,
\end{equation}
and there exists a unique sequence $\{e_i\}$ such that 
\begin{equation}\label{eqn-t1}
  t_i = d_i - 3 e_i + e_{i-1},
\end{equation}
with $t_i \in \{0, 1, 2\}$, where $d_i \in \{0 , 1, \ldots, 6\}$ and $e_i \in \{0, 1, 2\}$ is the carry from the $i$th digit to the $(i+1)$th digit in the modular summation of $3^2 a$, $-3a$ and $- a$. It then follows that
\begin{eqnarray*}
  \lefteqn{w(5a) + w\left( \frac{q-1}{2} - 7a\right)} \\
  & = & \sum_{i=0}^{m-1}s_i + \sum_{i=0}^{m-1} t_i \\
  & = & \sum_{i=0}^{m-1} ( 5 + a_{i-1} - a_{i-2} - a_i - 3 c_i + c_{i-1} ) \\
  & & \quad + \sum_{i=0}^{m-1} (4 + a_{i-2} - a_{i-1} - a_i - 3 e_i + e_{i- 1}) \\
  & = & 9m - 2 \sum_{i=0}^{m-1} a_i - 2\sum_{i=0}^{m-1} c_i - 2\sum_{i=0}^{m-1} e_i.
\end{eqnarray*}
Thus, to prove Theorem~\ref{thm-goal41}, it suffices to prove
$$
\sum_{i=0}^{m-1} (a_i + c_i + e_i ) \leq 4m.
$$

We now prove the inequality above by discussing all the possible values of $a_i + c_i + e_i$, where $a_i \in \{0, 1, 2\}$, $c_i \in \{0, 1, 2, 3\}$ and $e_i \in \{0, 1, 2\}$. 
\begin{itemize}
  \item[I.] $a_i + c_i + e_i = 7$. 
    
    There is only one possibility: $a_i = 2$, $c_i = 3$ and $e_i = 2$. On one hand, by (\ref{eqn-s1}), $s_i = b_i - 3 c_i + c_{i-1} \ge 0 $ implies that $ b_i \geq 6$. On the other hand, by (\ref{eqn-b1}), we have
    $$
    b_i = 5 + a_{i-1} - a_{i-2} - a_i = 3 + a_{i-1} - a_{i-2} \leq 5,
    $$
    contradicting to $b_i \geq 6$.

  \item[II.] $a_i + c_i + e_i = 6$. 
    
    There are three possibilities: (i) $a_i = 2$, $c_i = 3$ and $e_i = 1$; (ii) $a_i = 2$, $c_i = 2$ and $e_i = 2$; (iii) $a_i = 1$, $c_i = 3$ and $e_i = 2$.   
    \begin{itemize}
    \item[II.(i).] $a_i = 2$, $c_i = 3$ and $e_i = 1$. 
      
      This is impossible according to Case I.
    
    \item[II.(ii).] $a_i = 2$, $c_i = 2$ and $e_i = 2$. 
      
      By (\ref{eqn-t1}), $t_i = d_i - 3 e_i + e_{i-1} \geq 0 $ implies that 
      \begin{equation}\label{eqn-t22}
	4 \leq d_i \leq 6.
      \end{equation}
      Meanwhile, by (\ref{eqn-d1}), we have
      \begin{equation}\label{eqn-d22}
	d_i = 4 + a_{i-2} - a_{i-1} - a_i = 2 + a_{i-2} - a_{i-1} \leq 4.
      \end{equation}
      It then follows from (\ref{eqn-t22}) and (\ref{eqn-d22}) that $d_i = 4$, $a_{i-2} = 2$ and $a_{i-1} = 0$. Then by (\ref{eqn-b1}), we have
      $$
      b_i = 5 + a_{i-1} - a_{i-2} - a_i = 1.
      $$
      However, by (\ref{eqn-s1}), $s_i = b_i - 3 c_i + c_{i-1} \geq 0$ implies that $b_i \geq 3$, and this leads to a contradiction.

    \item[II.(iii).] $a_i = 1$, $c_i = 3$ and $e_i = 2$. 
      
      By (\ref{eqn-s1}), note that $s_i = b_i - 3 c_i + c_{i-1} \geq 0 $. We then have
      \begin{equation}\label{eqn-s23}
	6 \leq b_i \leq 7.
      \end{equation}
      By (\ref{eqn-b1}), we also have
      \begin{equation}\label{eqn-b23}
	b_i = 5 + a_{i-1} - a_{i-2} - a_i = 4 + a_{i-1} - a_{i-2} \leq 6.
      \end{equation}
      Thus, (\ref{eqn-s23}) and (\ref{eqn-b23}) imply that $b_i = 6$, $a_{i-1} = 2$ and $a_{i-2} = 0$.  By (\ref{eqn-d1}), we can determine the value of $d_i$ as
      $$
      d_i = 4 + a_{i-2} - a_{i-1} - a_i = 1.
      $$
      Note that $t_i = d_i - 3 e_i + e_{i-1} \geq 0 $. Then we have $ 4  \leq d_i \leq 6$, this is a contradiction to $d_i = 1$.
  \end{itemize}

  \item[III.] $a_i + c_i + e_i = 5$. 
    
    There are six possibilities: (i) $a_i = 0$, $c_i = 3$ and $e_i = 2$; (ii) $a_i = 1$, $c_i = 2$ and $e_i = 2$; (iii) $a_i = 1$, $c_i = 3$ and $e_i = 1$; (iv) $a_i = 2$, $c_i = 1$ and $e_i = 2$; (v) $a_i = 2$, $c_i = 2$ and $e_i = 1$; (vi) $a_i = 2$, $c_i = 3$ and $e_i = 0$. 
    \begin{itemize}
      \item[III.(i).] $a_i = 0$, $c_i = 3$ and $e_i = 2$. 
	
	By (\ref{eqn-s1}) and (\ref{eqn-t1}), we have
	$6 \leq b_i \leq 7$ and $4 \leq d_i \leq 6$. Adding up these two inequalities, we have 
	\begin{equation}\label{eqn-bd31}
	  10 \leq b_i + d_i \leq 13.
	\end{equation}
	By (\ref{eqn-b1}) and (\ref{eqn-d1}), we have 
	\begin{eqnarray*}
	  b_i & = & 5 + a_{i-1} - a_{i-2} - a_i = 5 + a_{i-1} - a_{i-2} \\
	  d_i & = & 4 + a_{i-2} - a_{i-1} - a_i = 4 + a_{i-2} - a_{i-1}.
	\end{eqnarray*}
	Adding up the two inequalities above, we get
	$$
	b_i + d_i = 9,
	$$
	contradicting to (\ref{eqn-bd31}).

      \item[III.(ii).] $a_i = 1$, $c_i = 2$ and $e_i = 2$. 
	
	It follows from (\ref{eqn-b1}) and (\ref{eqn-d1}) that 
	$$
	b_i + d_i = 9 - 2 a_i = 7.
	$$
	Since $b_i \ge 3 c_i - c_{i-1} \ge 3$ and $d_i \ge 3 e_i - e_{i-1} \ge 4$, we have $b_i = 3$ and $d_i = 4$. Then again by (\ref{eqn-b1}) and (\ref{eqn-d1}), we get 
	\begin{eqnarray*}
	  c_{i-1} & \ge & 3 c_i - b_i = 3, \\
	  e_{i-1} & \ge & 3 e_i - d_i = 2.
	\end{eqnarray*}
	Thus, $c_{i-1} = 3$ and $e_{i-1} = 2$, which is impossible according to Case III.(i).
      
      \item[III.(iii).] $a_i = 1$, $c_i = 3$ and $e_i = 1$. 
	
	By (\ref{eqn-s1}) and (\ref{eqn-b1}), we get 
	$$
	b_i \ge 3 c_i - c_{i-1} \ge 6
	$$
	and 
	$$
	b_i = 5 + a_{i-1} - a_{i-2} - a_i = 4 + a_{i-1} - a_{i-2} \le 6,
	$$
	respectively. Thus, we have $b_i = 6$, $a_{i-1} = 2$ and $a_{i-2} = 0$. Again by (\ref{eqn-s1}), $c_{i-1} \geq 3 c_i - b_i = 3$. With similar argument by (\ref{eqn-t1}) and (\ref{eqn-d1}), we have $d_i = 1$. It then follows that $e_{i-1} \geq 3 e_i - d_i = 2$. This is also impossible by Case III.(i).
      
      \item[III.(iv).] $a_i = 2$, $c_i = 1$ and $e_i = 2$. 
	
	By (\ref{eqn-t1}) and (\ref{eqn-d1}), we have 
	$$
	d_i \ge 3 e_i - e_{i-1} \ge 4
	$$
	and 
	$$
	d_i = 4 + a_{i-2} - a_{i-1} - a_i = 2 + a_{i-2} - a_{i-1} \le 4,
	$$
	respectively. The two inequalities can be satisfied only if $d_i = 4$, $a_{i-2} = 2$ and $a_{i-1} = 0$. Again by (\ref{eqn-t1}), we have $e_{i-1} \ge 3 e_i - d_i = 2$ and then $e_{i-1} = 2$. 
	From (\ref{eqn-t1}) and (\ref{eqn-d1}), it follows that
	$$
	d_{i-1} \ge 3 e_{i-1} - e_{i-2} \ge 4
	$$
	and 
	$$
	d_{i-1} = 4 + a_{i-3} - a_{i-2} - a_{i-1} = 2 + a_{i-3} \le 4.
	$$
	Thus, we have $d_{i-1} = 4$ and $a_{i-3} = 2$. Using (\ref{eqn-t1}) again, we get $e_{i-2} \ge 3 e_{i-1} - d_{i-1} = 2$ and then $e_{i-2} = 2$. Applying (\ref{eqn-t1}) and (\ref{eqn-d1}) again, we have
	$$
	d_{i-2} \geq 3 e_{i-2} - e_{i-3} \ge 4
	$$
	and 
	$$
	d_{i-2} = 4 + a_{i-4} - a_{i-3} - a_{i-2} = a_{i-4} \le 2,
	$$
	respectively. This is a contradiction. 
      
      \item[III.(v).] $a_i = 2$, $c_i = 2$ and $e_i = 1$. 
	
	By (\ref{eqn-b1}) and (\ref{eqn-d1}), we have 
	$$
	b_i + d_i = 9 - 2a_i = 5,
	$$
	while by (\ref{eqn-s1}) and (\ref{eqn-t1}), we get 
	$$
	b_i \geq 3 c_i - c_{i-1} \geq 3
	$$
	and 
	$$
	d_i \geq 3 e_i - e_{i-1} \geq 1,
	$$
	respectively. Thus, there are two subcases: one is $b_i=3$ and $d_i = 2$; the other is $b_i = 4$ and $d_i = 1$. In the first subcase, $a_{i-1} - a_{i-2} = 0$. With (\ref{eqn-b1}) and (\ref{eqn-d1}), $c_{i-1} \ge 3 c_i - b_i = 3$ and $e_{i-1} \geq 3 e_i - d_i = 1$. We summarize the information in the following table.
	\begin{equation*}
	  I := \left[ \begin{array}{ccc}
	    a_{i-1} = a_{i-2} & c_{i-1} = 3 & e_{i-1} \geq 1 
	  \end{array} \right]
	\end{equation*}
	

By the previous argument in Case III.(iii), only the case that $a_{i-1} = a_{i-2} = 0$, $c_{i-1}= 3$ and $e_{i-1} = 1$ is possible. However, by (\ref{eqn-s1}) and (\ref{eqn-b1}), we have
$$
b_{i-1} \ge 3 c_{i-1} - c_{i-2} \ge 6
$$
and
$$
b_{i-1} = 5 + a_{i-2} - a_{i-3} - a_{i-1} = 5 - a_{i-3} \le 5.
$$
This is a contradiction. Now we look at the second subcase, i.e., $b_i = 4$, $d_i = 1$ and $a_{i-1} - a_{i-2} = 1$. Note that by (\ref{eqn-s1}) and (\ref{eqn-t1}), we have $c_{i-1} \ge 3 c_i - b_i = 2$ and $e_{i-1} \ge 3 e_i - d_i = 2$. To sum up, we list the information in the following table.
	\begin{equation*}
	  I := \left[ \begin{array}{ccc}
	    a_{i-1} - a_{i-2} =1 & c_{i-1} \ge 2 & e_{i-1} = 2 
	  \end{array} \right]
	\end{equation*}


This is also impossible by Case III.(ii). 

      \item[III.(vi).] $a_i = 2$, $c_i = 3$ and $e_i = 0$. 
	
	This is impossible by Case I.
    \end{itemize}
\end{itemize}

Thus, for each $i$, we have $a_i + c_i + e_i \leq 4$, which completes the proof.
\qed
\end{proof}

\begin{theorem}\label{thm-goal42}
  For each $a$ with $0 \le a \le q-2$ and $b$ with $0 \le b \le q-2$, we have
\begin{equation}\label{eqn-dsum2}
  w(a) + w(b) + w \left( \frac{q-1}{2} - 7 a - 5 b \right) \ge m,
\end{equation}
where $w(a)$ is the digit sum of $a$ defined in Theorem~\ref{thm-sti}.
\end{theorem}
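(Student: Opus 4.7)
The plan is to mirror the digit-arithmetic proof of Theorem~\ref{thm-goal41}, but now carrying an extra input variable $b$ and therefore an enlarged carry bookkeeping. First, I would extend the ternary digits $(a_i),(b_i)$ of $a,b$ to $m$-periodic sequences, so that multiplying by $3^j$ modulo $q-1$ corresponds to cyclic shifting. Combining $\frac{q-1}{2}=\sum_{i=0}^{m-1}3^i$ with the weight-one expansions $-7=-3^2+3-1$ and $-5=-3^2+3+1$, I would write
\begin{equation*}
  \tfrac{q-1}{2} - 7a - 5b \;\equiv\; \sum_{i=0}^{m-1}\bigl(1 - a_{i-2} + a_{i-1} - a_i - b_{i-2} + b_{i-1} + b_i\bigr)\,3^i \pmod{q-1},
\end{equation*}
denote the bracketed coefficient by $g_i\in\{-5,\dots,7\}$, and add the trivial $3(q-1)=6\sum_i 3^i\equiv 0$ to bring the coefficients into the nonnegative range $\{1,\dots,13\}$. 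Theorem~\ref{thm-carry} then provides a unique bounded carry sequence $(c_i)$ for which $s_i = g_i - 3c_i + c_{i-1}\in\{0,1,2\}$ are the true ternary digits of $\frac{q-1}{2}-7a-5b$.

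Next, I would sum the relation $s_i = g_i - 3c_i + c_{i-1}$ cyclically. Using $\sum_i c_{i-1}=\sum_i c_i$ together with the cyclic invariance $\sum_i a_{i-j}=w(a)$ and $\sum_i b_{i-j}=w(b)$, a routine telescoping gives
\begin{equation*}
  w\bigl(\tfrac{q-1}{2}-7a-5b\bigr) \;=\; m - w(a) + w(b) - 2\sum_{i=0}^{m-1} c_i,
\end{equation*}
so the target inequality (\ref{eqn-dsum2}) is equivalent to the clean combinatorial statement
\begin{equation*}
  \sum_{i=0}^{m-1} c_i \;\le\; \sum_{i=0}^{m-1} b_i \;=\; w(b).
\end{equation*}

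The main work is to establish this carry bound. Following the template used in the proof of Theorem~\ref{thm-goal41}, I would classify the admissible tuples $(a_{i-2},a_{i-1},a_i,b_{i-2},b_{i-1},b_i,c_{i-1})$ and, for each, read off the forced value of $c_i$ from $3c_i = g_i + c_{i-1} - s_i$. The extremal cases $c_i\in\{2,3\}$ already require $g_i\ge 6$, which sharply constrains the neighboring digits (for example, $g_i=7$ forces $a_{i-2}=a_i=b_{i-2}=0$ and $a_{i-1}=b_{i-1}=b_i=2$); propagating such constraints along the carry chain, one either reaches a contradiction or exhibits a local inequality of the form $c_i - b_i \le \Phi_{i-1}-\Phi_i$ for a bounded potential $\Phi$ depending on the current carry and local digit pattern, which then telescopes around the cycle to give the global bound.

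The principal obstacle lies in this final case analysis. In Theorem~\ref{thm-goal41} the analogous bound $a_i + c_i + e_i \le 4$ held strictly pointwise; by contrast, here the naive pointwise bound $c_i \le b_i$ is false (one can attain $c_i=3$ together with $b_i=1$, provided $c_{i-1}=3$ and the surrounding digits take specific values), so a genuine amortized argument is unavoidable. Designing the potential function $\Phi$, and organizing the seven-variable case enumeration so that every local excess of $c_i$ over $b_i$ is provably matched by a compensating decrement elsewhere in the cycle, is where the real technical difficulty lies, and where I expect the bulk of the proof to concentrate.
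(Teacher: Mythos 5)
Your reduction step is sound: with the digit expansions $-7=-3^2+3-1$, $-5=-3^2+3+1$ and Theorem~\ref{thm-carry}, the equivalence of (\ref{eqn-dsum2}) with the carry bound $\sum_{i=0}^{m-1}c_i\le w(b)$ is correct. But that bound \emph{is} the theorem, and you never prove it: you assert that a seven-variable case analysis ``either reaches a contradiction or exhibits'' a bounded potential $\Phi$ with $c_i-b_i\le\Phi_{i-1}-\Phi_i$, without defining $\Phi$ or carrying out any of the enumeration. This amortized carry analysis is the entire technical content of the result (it occupies the paper's appendix), so what you have is a plan, not a proof. The local claims you do state are already signs of trouble: $c_i=2$ only forces $g_i\ge 3$, not $g_i\ge 6$; and your illustrative configuration $c_i=3$, $b_i=1$, $c_{i-1}=3$ cannot occur at all, since $c_i=3$ with $b_i=1$ forces $g_i=6$, hence $a_{i-2}=a_i=b_{i-2}=0$, $a_{i-1}=b_{i-1}=2$, which gives $g_{i-1}\le 1$, while $c_{i-1}=3$ would need $g_{i-1}\ge 6$. (Your underlying point that the pointwise bound $c_i\le b_i$ can fail is nevertheless true, so a genuinely amortized argument is indeed required.)

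It is also worth noting that the paper sidesteps the $b$-dependent target entirely by a different choice of decomposition: it writes $-5a=-3a-3a+a$ and $-7b=-3^2b+3b-b$, so that the coefficients attached to each of $a$ and $b$ sum to $-1$; then $w(a)$ and $w(b)$ cancel in the telescoping and (\ref{eqn-dsum2}) becomes the uniform, digit-free inequality $\sum_{i=0}^{m-1}c_i\le 4m$ for carries $c_i\in\{0,\dots,6\}$. The paper then proves this in full: Lemma~\ref{lem-ci1} rules out $c_i=6$, Lemma~\ref{lem-ci2} shows carries equal to $5$ are isolated, Lemma~\ref{lem-ci5} (via Lemmas~\ref{lem-ci3} and~\ref{lem-ci4}) shows that after a $5$ and a run of $4$'s the next carry is again at most $4$, and Corollary~\ref{coro-ci6} then forces a carry strictly below $4$ between any two carries equal to $5$, so summing over segments gives $\sum_i c_i\le 4m$. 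To complete your version you would have to supply an analogous chain of lemmas comparing $(c_i)$ with the digit sequence $(b_i)$ --- which your decomposition makes strictly harder --- or simply switch to the paper's decomposition, where the required carry bound no longer involves the digits of $a$ or $b$; either way, the decisive case analysis is currently missing.
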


  For each $0 \leq a \leq q-2$, and $0 \leq b \leq q-2$, suppose that $a$ and $b$ have the ternary representation $a = \sum_{i=0}^{m-1} a_i 3^i$ with $a_i \in \{0, 1, 2 \}$ and $b = \sum_{i=0}^{m-1} b_i 3^i$ with $b_i \in \{ 0, 1, 2\}$, respectively. Here we view both $\{a_i\}$ and $\{b_i\}$ as ternary sequences with period $m$. It then follows that
  \begin{eqnarray*}
    \lefteqn{\frac{q-1}{2} - 5a - 7b}\\
    & = & \frac{q-1}{2} - 3a - 3a + a - 3^2 b + 3b - b \\
    & = & \sum_{i=0}^{m-1} ( 1 - a_{i-1} - a_{i-1} + a_i - b_{i-2} + b_{i-1} - b_i) 3^i \pmod{3^m - 1} \\
    & = & \sum_{i=0}^{m-1} ( 1 + (2 - a_{i-1}) + (2 - a_{i-1}) + a_i \\
    & & \quad + (2 - b_{i-2}) + b_{i-1} + (2 - b_i)) 3^i \pmod{3^m - 1} \\
    & = & \sum_{i=0}^{m-1} ( 9 - 2 a_{i-1} + a_i -  b_{i-2} + b_{i-1} - b_i) 3^i \pmod{3^m - 1}.
  \end{eqnarray*}
For each $i$, let 
\begin{equation}\label{eqn-d2}
  d_i = 9 - 2 a_{i-1} + a_i - b_{i-2} + b_{i-1} - b_i.
\end{equation}
It is easily seen that $d_i \in \{ 1, 2, \ldots, 13 \}$. By Theorem~\ref{thm-carry}, there exists a unique sequence $\{c_i\}$ such that
\begin{equation}\label{eqn-s2}
  s_i = d_i - 3c_i + c_{i-1},
\end{equation}
with $s_i \in \{0, 1, 2\}$, where $c_i \in \{0, 1, \ldots, 6\}$ is the carry from the $i$th digit to the $(i+1)$th digit in the modular summation of $\frac{q-1}{2}$, $-3a$, $-3a$, $a$, $-3^2 b$, $3b$ and $- b$. We then have
\begin{eqnarray*}
  \lefteqn{w(a) + w(b) + w\left( \frac{q-1}{2} - 5a - 7b \right)} \\
  & = & \sum_{i=0}^{m-1} a_i + \sum_{i=0}^{m-1} b_i + \sum_{i=0}^{m-1} s_i \\
  & = & \sum_{i=0}^{m-1} a_i + \sum_{i=0}^{m-1} b_i \\
  & & \quad + \sum_{i=0}^{m-1} (9 - 2 a_{i-1} + a_i - b_{i-2} + b_{i-1} - b_i - 3c_i + c_{i-1}) \\
  & = & 9m - 2\sum_{i=0}^{m-1} c_i .
\end{eqnarray*}
Thus, it suffices to prove 
\begin{equation}\label{eqn-ci0}
\sum_{i=0}^{m-1} c_i \leq 4m.
\end{equation}
To prove Theorem~\ref{thm-goal42}, we need the following results on the sequence $\{c_i\}$.

\begin{lemma}\label{lem-ci1}
For each $i$, $c_i \leq 5$.
\end{lemma}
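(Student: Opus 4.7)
The plan is to argue by contradiction: suppose that $c_i = 6$ for some index $i$, and derive the contradiction by a single look back to position $i-1$. From (\ref{eqn-s2}) together with $s_i \in \{0, 1, 2\}$ one obtains $d_i + c_{i-1} = 18 + s_i \in \{18, 19, 20\}$. Inspection of (\ref{eqn-d2}) gives $d_i \le 13$, with equality iff $(a_{i-1}, a_i, b_{i-2}, b_{i-1}, b_i) = (0, 2, 0, 2, 0)$; combined with the general carry bound $c_{i-1} \le 6$ stated before the lemma, only three sub-cases survive: (I) $d_i = 13$, $c_{i-1} = 5$; (II) $d_i = 13$, $c_{i-1} = 6$; (III) $d_i = 12$, $c_{i-1} = 6$.

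In cases (I) and (II), the uniqueness of the maximizer for $d_i = 13$ fixes $a_{i-1} = 0$, $b_{i-2} = 0$, $b_{i-1} = 2$, so substituting into (\ref{eqn-d2}) at position $i-1$ gives
\[
d_{i-1} \;=\; 9 - 2 a_{i-2} + 0 - b_{i-3} + 0 - 2 \;=\; 7 - 2 a_{i-2} - b_{i-3} \;\le\; 7.
\]
For case (III) I would solve $-2 a_{i-1} + a_i - b_{i-2} + b_{i-1} - b_i = 3$ over ternary digits. Any positive value of $a_{i-1}$ bounds the left-hand side by $2 < 3$, forcing $a_{i-1} = 0$; the surviving tuples are
\[
(a_i, b_{i-2}, b_{i-1}, b_i) \in \{(1, 0, 2, 0), (2, 0, 1, 0), (2, 0, 2, 1), (2, 1, 2, 0)\},
\]
and a glance shows $b_{i-2} - b_{i-1} \le -1$ in each. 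Hence $d_{i-1} = 9 - 2 a_{i-2} - b_{i-3} + (b_{i-2} - b_{i-1}) \le 8$ in case (III) as well.

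Finally, applying (\ref{eqn-s2}) at position $i-1$ with $c_{i-2} \le 6$ gives $3 c_{i-1} \le d_{i-1} + 6$. In case (I) this forces $d_{i-1} \ge 9$, contradicting $d_{i-1} \le 7$; in case (II) it forces $d_{i-1} \ge 12 > 7$; in case (III), $d_{i-1} \ge 12 > 8$. Each sub-case yields a contradiction, so $c_i = 6$ is impossible and $c_i \le 5$ for every $i$. The main technical point is the enumeration in case (III); the saving grace is that the constraint $a_{i-1} = 0$ holds uniformly over all feasible tuples, so the bound on $b_{i-2} - b_{i-1}$ is a one-line check on a short list and the argument never needs to chain backward more than a single position.
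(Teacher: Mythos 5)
Your proof is correct and follows essentially the same route as the paper: assume $c_i = 6$, observe via (\ref{eqn-s2}) and the carry bound that $c_{i-1} \in \{5,6\}$ (with $d_i \in \{12,13\}$), and derive a contradiction one position back by combining (\ref{eqn-d2}) at indices $i$ and $i-1$. The only cosmetic difference is that you enumerate the digit tuples attaining $d_i = 13$ or $12$ to bound $d_{i-1}$ directly, whereas the paper simply adds the two digit identities to bound $d_i + d_{i-1} \le 20$ against the lower bounds forced by the carries.
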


\begin{proof}
  Suppose that for some $i$, $c_i = 6$. Note that $s_i = d_i - 3 c_i + c_{i-1} \geq 0$, then $d_i \geq 3 c_i - c_{i-1} \ge 12$ and $c_{i-1} \geq 3 c_i - d_i \ge 5$. This implies $c_{i-1} = 6$ or $c_{i-1} = 5$. If $c_{i-1} = 6$, since $s_{i-1} = d_{i-1} - 3c_{i-1} + c_{i-2} \geq 0$, we have $12 \leq d_{i-1} \leq 13$. Thus, adding up the two inequalities by (\ref{eqn-d2}), we get
  $$
  24 \leq d_i + d_{i-1} = 18 - 2 a_{i-2} - a_{i-1} + a_i - b_{i-3} - b_i,
  $$
  but this inequality cannot be satisfied for all $\{a_i\}$. If $c_{i-1} = 5$, similar argument also leads to a contradiction.
\qed
\end{proof}

\begin{lemma}\label{lem-ci2}
  If for some $i$, $c_i = 5$, then $c_{i-1} \leq 4$ and $c_{i+1} \leq 4$.
\end{lemma}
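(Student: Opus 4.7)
The plan is to argue by contradiction using Lemma~\ref{lem-ci1} and the recurrences \eqref{eqn-d2}--\eqref{eqn-s2}. First I treat $c_{i+1} \le 4$. Suppose on the contrary that $c_i = c_{i+1} = 5$. The constraint $s_{i+1} = d_{i+1} - 3c_{i+1} + c_i \in \{0,1,2\}$ immediately gives $d_{i+1} \in \{10, 11, 12\}$, while $s_i = d_i - 3c_i + c_{i-1} \in \{0,1,2\}$ yields $15 - c_{i-1} \le d_i \le 17 - c_{i-1}$. In particular $d_i + d_{i+1} \ge 25 - c_{i-1}$.

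Next I compute $d_i + d_{i+1}$ directly from \eqref{eqn-d2}: the two copies of $b_{i-1}$ cancel, the two copies of $b_i$ cancel, and one copy of $a_i$ is absorbed, leaving
\[
d_i + d_{i+1} = 18 - 2a_{i-1} - a_i + a_{i+1} - b_{i-2} - b_{i+1} \le 20,
\]
since every ternary digit lies in $\{0,1,2\}$. Combining with the lower bound forces $c_{i-1} \ge 5$, so $c_{i-1} = 5$ by Lemma~\ref{lem-ci1}.

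Now I iterate the same two-step computation, shifted one index to the left: from $c_{i-1} = c_i = 5$ the analogous identity for $d_{i-1} + d_i$ is bounded by $20$ in exactly the same way and produces $c_{i-2} = 5$. Because $\{a_j\}$ and $\{b_j\}$ (and hence $\{d_j\}$ and $\{c_j\}$) are periodic with period $m$, this finite descent forces $c_j = 5$ for every $j$, and then \eqref{eqn-s2} collapses to $s_j = d_j - 10$ for all $j$. Summing \eqref{eqn-d2} over one full period, the shifted sums of $a_j$ and $b_j$ telescope under periodicity to give $\sum_{j=0}^{m-1} d_j = 9m - \sum_j a_j - \sum_j b_j$, so
\[
\sum_{j=0}^{m-1} s_j = -m - \sum_{j=0}^{m-1} a_j - \sum_{j=0}^{m-1} b_j < 0,
\]
contradicting $s_j \ge 0$. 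The bound $c_{i-1} \le 4$ follows by a completely parallel argument, starting from the hypothetical pair $(c_{i-1}, c_i) = (5,5)$ and computing $d_{i-1} + d_i = 18 - 2a_{i-2} - a_{i-1} + a_i - b_{i-3} - b_i \le 20$, which again triggers the same descent and contradiction.

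The main bookkeeping obstacle is ensuring the telescoping really works: one has to check that the $a$- and $b$-shifts in $d_i + d_{i+1}$ cancel down to digits bounded by $2$, and that the full-period sum of $d_j$ collapses cleanly to $9m - \sum a_j - \sum b_j$. Once these two identities are in place, the inequalities are tight enough that the descent closes in one shot.
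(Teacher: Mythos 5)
Your proposal is correct, and it begins exactly as the paper does: assume two cyclically adjacent carries equal $5$, use $s_j = d_j - 3c_j + c_{j-1} \ge 0$ to get $d_i + d_{i+1} \ge 25 - c_{i-1}$, and pair this with the telescoped identity $d_i + d_{i+1} = 18 - 2a_{i-1} - a_i + a_{i+1} - b_{i-2} - b_{i+1} \le 20$ to force the preceding carry to be $5$ as well (your identity and the $\le 20$ bound check out). Where you diverge is in how the contradiction is closed. The paper stops the propagation after one step: adding the two inequalities pins down specific digit values ($a_i = 2$, $a_{i-1} = a_{i-2} = b_i = b_{i-3} = 0$, $b_{i-2} - b_{i-1} = 1$, hence $d_i = d_{i-1} = 10$), which give $c_{i-2} = 5$ and then the local contradiction $10 \le d_{i-2} = 9 - 2a_{i-3} - b_{i-4} - b_{i-2} \le 9$. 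You instead never extract digit values: you iterate the propagation around the whole cycle (legitimate, since Theorem~\ref{thm-carry} makes $\{c_i\}$ cyclic with $c_{-1} = c_{m-1}$), conclude $c_j = 5$ for all $j$, and then sum (\ref{eqn-d2}) over a full period to get $\sum_j d_j = 9m - \sum_j a_j - \sum_j b_j$, whence $\sum_j s_j = -m - \sum_j a_j - \sum_j b_j < 0$, contradicting $s_j \ge 0$. (Equivalently, $\sum d_j \ge 10m$ versus $\sum d_j \le 9m$.) Your global-counting finish is a genuinely different and arguably cleaner route, as it avoids the digit case analysis; the paper's local argument, on the other hand, yields the contradiction within three consecutive indices and is the template reused in the later Lemmas~\ref{lem-ci3}--\ref{lem-ci5}, where the global summation trick would not suffice on its own. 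One stylistic note: the paper reduces $c_{i+1} \le 4$ to $c_{i-1} \le 4$ by applying the statement at index $i+1$; your handling of both directions through the single claim ``no two cyclically adjacent carries equal $5$'' accomplishes the same thing.
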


\begin{proof}
  It suffices to prove $c_{i-1} \leq 4$ only. Suppose to the contrary that $c_{i-1} = 5$. Note that $s_i = d_i - 3 c_i + c_{i-1} \ge 0 $ and $s_{i-1} = d_{i-1} - 3 c_{i-1} + c_{i-2} \ge 0$. Then we have $d_i \geq 3 c_i - c_{i-1} \ge 10$ and $d_{i-1} \ge 10$. By (\ref{eqn-d2}), we have 
  \begin{equation}\label{eqn-ci21}
  10 \leq d_i = 9 - 2 a_{i-1} + a_i - b_{i-2} + b_{i-1} - b_i
\end{equation}
  and 
  \begin{equation}\label{eqn-ci22}
  10 \leq d_{i-1} = 9 - 2 a_{i-2} + a_{i-1} - b_{i-3} + b_{i-2} - b_{i-1}.
\end{equation}
  Adding up these two inequalities, we get
  $$
  20 \leq d_i + d_{i-1} = 18 - 2a_{i-2} - a_{i-1} + a_i - b_{i-3} - b_i.
  $$
  This can be satisfied only if $a_i = 2$ and $a_{i-1} = a_{i-2} = b_i =  b_{i-3} = 0 $. Back to (\ref{eqn-ci21}) and (\ref{eqn-ci22}), we have $b_{i-2} - b_{i-1} \le 1$ and $b_{i-2} - b_{i-1} \ge 1$, respectively. Thus, $b_{i-2} - b_{i-1} = 1$ and $d_i = d_{i-1} = 10$. Again by (\ref{eqn-s2}), we have $c_{i-2} \geq 3 c_{i-1} - d_{i-1} = 5$, which implies that $c_{i-2} = 5$ by Lemma~\ref{lem-ci1}. Then $d_{i-2} \geq 3 c_{i-2} - c_{i-3} \ge 10$. On the other hand, 
  \begin{eqnarray*}
  d_{i-2} & = & 9 - 2a_{i-3} + a_{i-2} - b_{i-4} + b_{i-3} - b_{i-2} \\
  & = & 9 - 2 a_{i-3} - b_{i-4} - b_{i-2} \\
  & \leq & 9.
\end{eqnarray*}
This is a contradiction.
\qed
\end{proof}

\begin{lemma}\label{lem-ci3}
  If for some $i$, $c_i = 5$ and $c_{i-1} = 4$, then $c_{i-2} \leq 4$.
\end{lemma}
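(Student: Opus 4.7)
The plan is to argue by contradiction. Since Lemma~\ref{lem-ci1} gives $c_{i-2} \le 5$, the only alternative to the desired conclusion is $c_{i-2} = 5$, so assume this for contradiction. Applying Lemma~\ref{lem-ci2} at index $i-2$ yields $c_{i-3} \le 4$. The recurrence (\ref{eqn-s2}) together with $s_j \in \{0,1,2\}$ then produces three lower bounds: $d_i \ge 11$ (from $c_i = 5$, $c_{i-1}=4$), $d_{i-1} \ge 7$ (from $c_{i-1}=4$, $c_{i-2}=5$), and $d_{i-2} \ge 11$ (from $c_{i-2}=5$, $c_{i-3} \le 4$), giving $d_i + d_{i-1} + d_{i-2} \ge 29$.

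The key algebraic step is to expand this sum directly via (\ref{eqn-d2}); the interior $b_{i-1}, b_{i-3}$ and partial cancellations in $a_{i-1}, a_{i-2}$ telescope, leaving
\[
d_i + d_{i-1} + d_{i-2} = 27 + a_i - a_{i-1} - a_{i-2} - 2 a_{i-3} - b_i - b_{i-2} - b_{i-4}.
\]
Combined with the lower bound $29$, and using $a_i \le 2$ together with the nonnegativity of the remaining digits, one is forced to take $a_i = 2$ and $a_{i-1} = a_{i-2} = a_{i-3} = b_i = b_{i-2} = b_{i-4} = 0$, with equality throughout, hence $d_i = 11$, $d_{i-1} = 7$, $d_{i-2} = 11$. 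Substituting back into the individual formulas pins down $b_{i-3} = 2$ (from $d_{i-2} = 9 + b_{i-3}$), $b_{i-1} = 0$ (from $d_{i-1} = 7 - b_{i-1}$), and then the identity $d_{i-2} = 15 - c_{i-3} + s_{i-2} = 11$ forces $s_{i-2} = 0$ and $c_{i-3} = 4$ exactly.

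The contradiction is extracted two more indices back. With the digit values just obtained,
\[
d_{i-3} = 9 - 2 a_{i-4} + a_{i-3} - b_{i-5} + b_{i-4} - b_{i-3} = 7 - 2 a_{i-4} - b_{i-5} \le 7,
\]
while $c_{i-3} = 4$ gives $d_{i-3} = 12 - c_{i-4} + s_{i-3} \ge 12 - 5 = 7$ via Lemma~\ref{lem-ci1}. Equality therefore yields $a_{i-4} = b_{i-5} = 0$, $s_{i-3} = 0$, and $c_{i-4} = 5$. Now Lemma~\ref{lem-ci2} applied at index $i-4$ gives $c_{i-5} \le 4$, so $d_{i-4} \ge 15 - 4 = 11$; but substituting the known values,
\[
d_{i-4} = 9 - 2 a_{i-5} + a_{i-4} - b_{i-6} + b_{i-5} - b_{i-4} = 9 - 2 a_{i-5} - b_{i-6} \le 9,
\]
the sought contradiction.

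The principal obstacle is the careful bookkeeping of digit-level constraints propagating across four consecutive indices, since each carry equation couples three ternary digits from both $a$ and $b$. What makes the problem tractable is the telescoping identity for $d_i + d_{i-1} + d_{i-2}$, which collapses the entire first layer of case analysis to a single one-sided inequality that simultaneously pins down most of the relevant digits; after that, the remaining two indices succumb to direct substitution.
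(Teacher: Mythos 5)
Your proof is correct and follows essentially the same route as the paper: assume $c_{i-2}=5$, derive $d_i\ge 11$, $d_{i-1}\ge 7$, $d_{i-2}\ge 11$ from the carry recurrence and Lemma~\ref{lem-ci2}, sum them to force $a_i=2$ and the surrounding digits to vanish, then propagate to $c_{i-3}=4$, $c_{i-4}=5$ and obtain the contradiction $d_{i-4}\le 9<11$. The only differences are cosmetic (you note exact equality in the summed bound and record the unneeded value $b_{i-1}=0$), so no further comment is needed.
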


\begin{proof}
  Suppose to the contrary that $c_{i-2} = 5$. Since $s_i = d_i - 3 c_i + c_{i-1} \geq 0 $, we have
  \begin{eqnarray*}
    d_i & \geq & 3 c_i - c_{i-1} = 11,\\
    d_{i-1} & \geq & 3 c_{i-1} - c_{i-2} = 7,\\
    d_{i-2} & \geq & 3 c_{i-2} - c_{i-3} \geq 11,
  \end{eqnarray*}
  where the last inequality follows from Lemma~\ref{lem-ci2}. By (\ref{eqn-d2}), we have
  \begin{eqnarray}
    11 & \leq & d_i = 9 - 2 a_{i-1} + a_i - b_{i-2} + b_{i-1} - b_i, \label{eqn-ci31} \\
    7 & \leq & d_{i-1} = 9 - 2a_{i-2} + a_{i-1} - b_{i-3} + b_{i-2} - b_{i-1}, \label{eqn-ci32} \\
    11 & \leq & d_{i-2} = 9 - 2 a_{i-3} + a_{i-2} - b_{i-4} + b_{i-3} - b_{i-2}. \label{eqn-ci33}
  \end{eqnarray}
Adding up the three inequalities, we get 
$$
29 \leq 27 -  2 a_{i-3} - a_{i-2} - a_{i-1} + a_i - b_{i-4} - b_{i-2} - b_i.
$$
This can be satisfied only if $a_i = 2$ and $a_{i-1} = a_{i-2} = a_{i-3} = b_i = b_{i-2} = b_{i-4} = 0$. From (\ref{eqn-ci33}), it follows that $ 11 \leq d_{i-2} = 9 + b_{i-3} $. Thus, $b_{i-3} = 2$ and $d_{i-2} = 11$. Again by (\ref{eqn-s2}), we have $c_{i-3} \geq 3 c_{i-2} - d_{i-2} = 4$. By Lemma~\ref{lem-ci2}, $c_{i-3} = 4$. Since $s_{i-3} = d_{i-3} - 3 c_{i-3} + c_{i-4} \geq 0$, we have $d_{i-3} \geq 3 c_{i-3} - c_{i-4} \geq 7$. Then by (\ref{eqn-d2}), 
\begin{eqnarray*}
  7 \leq d_{i-3} & = & 9 - 2 a_{i-4} + a_{i-3} - b_{i-5} + b_{i-4} - b_{i-3} \\
  & = & 7 - 2 a_{i-4} - b_{i-5} \\
  & \leq & 7.
\end{eqnarray*}
It then follows that $d_{i-3} = 7$ and $a_{i-4} = b_{i-5} = 0$. Thus, $c_{i-4} \geq 3 c_{i-3} - d_{i-3} = 5$, and then $c_{i-4} = 5$ by Lemma~\ref{lem-ci1}. On one hand, $d_{i-4} \geq 3 c_{i-4} - c_{i-5} \geq 11$. On the other hand, 
\begin{eqnarray*}
d_{i-4} & = & 9 - 2a_{i-5} + a_{i-4} - b_{i-6} + b_{i-5} - b_{i-4} \\
& = & 9 - 2a_{i-5} - b_{i-6} \\
& \leq & 9,
\end{eqnarray*}
which leads to a contradiction.
\qed
\end{proof}

\begin{lemma}\label{lem-ci4}
  If for some $i$, $c_i = 5$ and $c_{i-1} = c_{i-2} = 4$, then $c_{i-3} \leq 4$.
\end{lemma}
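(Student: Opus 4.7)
The plan is to follow the template of Lemmas~\ref{lem-ci2} and~\ref{lem-ci3}: assume for contradiction that $c_{i-3}=5$, combine the nonnegativity constraints $s_j=d_j-3c_j+c_{j-1}\ge 0$ with the explicit formula (\ref{eqn-d2}) for $d_j$ to pin down the ternary digits $a_j,b_j$ in a short window to the left of index $i$, and then push the forced configuration one step further back so that $c_{i-5}=5$ emerges. An appeal to Lemma~\ref{lem-ci3} at the index $j=i-3$ (where $c_{i-3}=5$ and $c_{i-4}=4$) then yields $c_{i-5}\le 4$, the desired contradiction.

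Concretely, the first step is to read off the four lower bounds
\[
d_i\ge 11,\qquad d_{i-1}\ge 8,\qquad d_{i-2}\ge 7,\qquad d_{i-3}\ge 11
\]
from $s_j\ge 0$, using $c_i=5$, $c_{i-1}=c_{i-2}=4$, $c_{i-3}=5$, and (for the last bound) the fact that Lemma~\ref{lem-ci2} forces $c_{i-4}\le 4$. Summing these four inequalities and exploiting the telescoping cancellation in (\ref{eqn-d2}) yields
\[
37\;\le\;\sum_{j=i-3}^{i} d_j\;=\;36-2a_{i-4}-a_{i-3}-a_{i-2}-a_{i-1}+a_i-b_{i-5}-b_{i-3}-b_{i-2}-b_i,
\]
and since the right-hand side is bounded above by $38$, only a handful of digit patterns are admissible: either $a_i=2$ together with at most one unit of ``penalty'' distributed among $a_{i-4},a_{i-3},a_{i-2},a_{i-1},b_{i-5},b_{i-3},b_{i-2},b_i$ (with weight $2$ on $a_{i-4}$), or $a_i=1$ with all of them vanishing.

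The second step is a short case analysis through these patterns. In each case I would re-impose the individual estimates $d_{i-1}\ge 8$ and $d_{i-3}\ge 11$ via (\ref{eqn-d2}); most possibilities are killed outright (for instance, $a_{i-2}=1$ already makes $d_{i-1}\le 7$, while $b_{i-2}=1$ combined with $d_{i-3}\ge 11$ forces $b_{i-4}\ge 3$), and every surviving case collapses onto the same data $a_{i-4}=0$, $b_{i-3}=0$, $b_{i-4}=2$, with the four $d_j$'s attaining their minimal admissible values. With $a_{i-4}=0$ and $b_{i-4}=2$ in hand, (\ref{eqn-d2}) gives
\[
d_{i-4}=9-2a_{i-5}+a_{i-4}-b_{i-6}+b_{i-5}-b_{i-4}\le 7,
\]
so $s_{i-4}\ge 0$ together with $c_{i-4}=4$ forces $c_{i-5}\ge 12-d_{i-4}\ge 5$. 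Lemma~\ref{lem-ci1} then pins $c_{i-5}=5$, and Lemma~\ref{lem-ci3} applied to $(c_{i-3},c_{i-4})=(5,4)$ delivers the contradiction $c_{i-5}\le 4$.

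The main obstacle is the bookkeeping: as the window of constrained indices grows from three to four consecutive digits, the slack $38-37=1$ in the sum bound is extremely tight, but the number of penalty coordinates is larger, so the enumeration splits into several subcases, each of which requires a separate verification that the common configuration $b_{i-4}=2$, $a_{i-4}=0$ is forced. No new tool is needed beyond Lemmas~\ref{lem-ci1}--\ref{lem-ci3}; the argument is pure digit-by-digit accounting, identical in spirit to the closing move of the proof of Lemma~\ref{lem-ci3}.
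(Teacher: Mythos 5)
Your proposal is correct in substance and follows the paper's skeleton for most of the way: the same four lower bounds $d_i\ge 11$, $d_{i-1}\ge 8$, $d_{i-2}\ge 7$, $d_{i-3}\ge 11$ (the last via Lemma~\ref{lem-ci2}), followed by digit forcing and a push back to index $i-5$. The differences are in the bookkeeping and in the finish. The paper adds the inequalities in two pairs, (\ref{eqn-ci41})+(\ref{eqn-ci42}) giving $a_{i-2}=0$ and then (\ref{eqn-ci43})+(\ref{eqn-ci44}) giving $a_{i-3}=a_{i-4}=b_{i-2}=b_{i-5}=0$, after which (\ref{eqn-ci44}) pins $b_{i-4}=2$, $b_{i-3}=0$, $d_{i-3}=11$, hence $c_{i-4}=4$, then $d_{i-4}=7$, $c_{i-5}=5$, and finally a fresh contradiction from $d_{i-5}\ge 11$ versus $d_{i-5}\le 9$. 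You instead sum all four at once and exploit the slack of one unit; this is workable but costs you a case split over where the single penalty sits, whereas the paper's pairing kills everything in two strokes. Your finishing move — once $c_{i-4}=4$ and $c_{i-5}=5$ are in hand, apply Lemma~\ref{lem-ci3} at index $i-3$ to get $c_{i-5}\le 4$ — is a genuine (and legitimate, non-circular) shortcut that spares the paper's last round of digit-chasing; it is arguably cleaner. Three points need tightening before this is a proof. First, the data that survives your case analysis must also record $a_{i-3}=0$ and $b_{i-5}=0$: without them, $d_{i-3}=11$ and the bound $d_{i-4}\le 7$ do \emph{not} follow from $a_{i-4}=0$, $b_{i-3}=0$, $b_{i-4}=2$ alone (e.g.\ $b_{i-5}>0$ would spoil $d_{i-4}\le 7$); fortunately both are penalty coordinates and are indeed forced to vanish in every surviving case. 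Second, $c_{i-4}=4$ is asserted but never derived; it requires $d_{i-3}=11$ together with $s_{i-3}\ge 0$ (giving $c_{i-4}\ge 3c_{i-3}-d_{i-3}=4$) and Lemma~\ref{lem-ci2} — a one-line fix, but it is the hinge on which both $c_{i-5}\ge 5$ and the appeal to Lemma~\ref{lem-ci3} turn. Third, two cosmetic claims are inaccurate: the case $b_{i-2}=1$ dies because $d_{i-2}\ge 7$ forces $b_{i-4}\le 1$ while $d_{i-3}\ge 11$ forces $b_{i-4}\ge 2$ (not because $b_{i-4}\ge 3$ is needed), and the statement that all four $d_j$ attain their minima is false in the subcase $a_i=2$ with zero penalty (one unit of slack remains), though the value you actually use, $d_{i-3}=11$, does hold in every surviving case.
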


\begin{proof}
  Suppose to contrary that $c_{i-3} = 5$. Note that $s_i = d_i - 3c_i + c_{i-1} \geq 0$ for each $i$. Then
  \begin{eqnarray*}
    d_i & \geq & 3 c_i - c_{i-1} = 11, \\
    d_{i-1} & \geq & 3 c_{i-1} - c_{i-2} = 8, \\
    d_{i-2} & \geq & 3 c_{i-2} - c_{i-3} = 7, \\
    d_{i-3} & \geq & 3 c_{i-3} - c_{i-4} \geq 11,
  \end{eqnarray*}
  where in the last inequality we used the fact that $c_{i-4} \le 4$ by Lemma~\ref{lem-ci2}. Using (\ref{eqn-d2}), we have
  \begin{eqnarray}
    11 & \leq & d_i = 9 - 2 a_{i-1} + a_i - b_{i-2} + b_{i-1} - b_i, \label{eqn-ci41} \\
    8 & \leq & d_{i-1} = 9 - 2 a_{i-2} + a_{i-1} - b_{i-3} + b_{i-2} - b_{i-1}, \label{eqn-ci42} \\
    7 & \leq & d_{i-2} = 9 - 2 a_{i-3} + a_{i-2} -  b_{i-4} + b_{i-3} - b_{i-2}, \label{eqn-ci43} \\
    11 & \leq & d_{i-3} = 9 - 2 a_{i-4} + a_{i-3} - b_{i-5} + b_{i-4} - b_{i-3}. \label{eqn-ci44}
  \end{eqnarray}
  Adding up (\ref{eqn-ci41}) and (\ref{eqn-ci42}), (\ref{eqn-ci43}) and (\ref{eqn-ci44}), respectively, we get
  \begin{eqnarray}
    19 & \leq & 18 - 2a_{i-2} - a_{i-1} + a_i - b_{i-3} - b_i, \label{eqn-ci45} \\
    18 & \leq & 18 - 2a_{i-4} - a_{i-3} + a_{i-2} - b_{i-5} - b_{i-2}. \label{eqn-ci46}
  \end{eqnarray}
  By (\ref{eqn-ci45}), we know that $a_{i-2} = 0$. Then by (\ref{eqn-ci46}), we have $a_{i-3} = a_{i-4} = b_{i-2} = b_{i-5} = 0$. The inequality (\ref{eqn-ci44}) becomes
  $$
  11 \leq d_{i-3} = 9 + b_{i-4} - b_{i-3}.
  $$
  This holds only if $b_{i-4} = 2$ and $b_{i-3} = 0$. Thus, $d_{i-3} = 11$. Note that $s_{i-3} = d_{i-3} - 3 c_{i-3} + c_{i-4} \ge 0$. We then have $c_{i-4} \geq 3 c_{i-3} - d_{i-3} = 4$. By Lemma~\ref{lem-ci2}, $c_{i-4} = 4$. It then follows that $d_{i-4} \geq 3c_{i-4} - c_{i-5} \geq 7$. By (\ref{eqn-d2}), we get 
  \begin{eqnarray*}
    7 \leq d_{i-4} & = & 9 - 2 a_{i-5} + a_{i-4} - b_{i-6} + b_{i-5} - b_{i-4} \\ 
    & = & 7 - 2 a_{i-5} - b_{i-6} \\
    & \leq & 7.
  \end{eqnarray*}
  Then $d_{i-4} = 7$ and $a_{i-5} = b_{i-6} = 0$. By (\ref{eqn-s2}), we have $c_{i-5} \geq 3 c_{i-4} - d_{i-4} = 5$ and then $c_{i-5} = 5$. On one hand, since $s_{i-5} \geq 0$, we have $d_{i-5} \geq 3 c_{i-5} - c_{i-6} \geq 11$ by Lemma~\ref{lem-ci2}. On the other hand, (\ref{eqn-d2}) becomes
  \begin{eqnarray*}
    d_{i-5} & = & 9 - 2 a_{i-6} + a_{i-5} - b_{i-7} + b_{i-6} - b_{i-5} \\
    & = & 9 - 2a_{i-6} - b_{i-7} - b_{i-5} \\
    & \leq & 9, 
  \end{eqnarray*}
  which is a contradiction.
\qed
\end{proof}

\begin{lemma}\label{lem-ci5}
  If for some $i$, $c_i = 5$ and $c_{i-1} = \cdots = c_{i-r} = 4$ for $r \ge 1$, then $c_{i-r-1} \leq 4$.
\end{lemma}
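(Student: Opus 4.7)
The strategy is to argue by contradiction, extending the pattern used in Lemmas~\ref{lem-ci3} and~\ref{lem-ci4}. Suppose $c_{i-r-1} = 5$. From $s_{i-j}\ge 0$ (equation \eqref{eqn-s2}), together with the hypothesised values $c_i = 5$, $c_{i-1} = \cdots = c_{i-r} = 4$, $c_{i-r-1} = 5$, and $c_{i-r-2} \le 4$ from Lemma~\ref{lem-ci2}, one obtains the chain
$d_i \ge 11$, $d_{i-j} \ge 8$ for $1 \le j \le r-1$, $d_{i-r} \ge 7$, and $d_{i-r-1} \ge 11$.

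The next step is to pair the $d_{i-j}$'s consecutively and exploit the identity $d_j + d_{j-1} = 18 + a_j - a_{j-1} - 2a_{j-2} - b_j - b_{j-3}$, whose maximum value is $20$. The first pair $d_i + d_{i-1} \ge 19$ forces $a_{i-2} = 0$ and $a_i \ge 1 + a_{i-1} + b_i + b_{i-3}$. Feeding $a_{i-2}=0$ into the next pair sum $d_{i-2} + d_{i-3} \ge 16$ forces $a_{i-3} = a_{i-4} = 0$ together with the corresponding $b_k$'s, and the cascade continues through the interior pairs. The final pair $d_{i-r} + d_{i-r-1} \ge 18$ plays the analogous role at the boundary, forcing $a_{i-r-1} = a_{i-r-2} = 0$ and killing further $b_k$'s in neighbouring indices.

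With the interior digits eliminated, the inequality $d_{i-r-1} \ge 11$ and the explicit formula \eqref{eqn-d2} pin down $b_{i-r-2} = 2$, $b_{i-r-1} = 0$, and $d_{i-r-1} = 11$; then \eqref{eqn-s2} gives $c_{i-r-2} \ge 3\cdot 5 - 11 = 4$, so $c_{i-r-2} = 4$ by Lemma~\ref{lem-ci2}. Following the tail argument of Lemma~\ref{lem-ci4}, the explicit value $d_{i-r-2} = 7$ forces $a_{i-r-3} = b_{i-r-4} = 0$ and yields $c_{i-r-3} = 5$. A final application of Lemma~\ref{lem-ci2} gives $c_{i-r-4} \le 4$ and hence $d_{i-r-3} \ge 11$, but substituting the already-determined digits into \eqref{eqn-d2} yields $d_{i-r-3} \le 9$, the desired contradiction.

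The main obstacle is the cascading middle step: the global sum $\sum_{j=0}^{r+1} d_{i-j}$ exceeds its required lower bound $8r + 21$ by a slack of $r-1$, so simply summing all inequalities is too loose to kill the interior digits for $r\ge 2$; the propagation has to be tracked pair by pair. A cleaner alternative is an induction on $r$, using Lemmas~\ref{lem-ci3} and~\ref{lem-ci4} as base cases and applying the inductive hypothesis to a shorter sub-run of $4$'s, which would bypass the explicit middle analysis altogether; the induction is the likely route for a tidy write-up.
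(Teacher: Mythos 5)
There is a genuine gap. Your setup coincides with the paper's: assuming $c_{i-r-1}=5$ one gets $d_i\ge 11$, $d_{i-j}\ge 8$ for $1\le j\le r-1$, $d_{i-r}\ge 7$, $d_{i-r-1}\ge 11$, and one then adds adjacent inequalities. But your cascading claim is too strong and in fact false: an interior pair sum such as $16 \le 18 - 2a_{i-3} - a_{i-2} + a_{i-1} - b_{i-4} - b_{i-1}$ (this is (\ref{eqn-ci56})) only yields $a_{i-3}\le 1$, not $a_{i-3}=0$, and the same holds all the way down the run; likewise the bottom pair $18 \le 18 - 2a_{i-r-2} - a_{i-r-1} + a_{i-r} - b_{i-r-3} - b_{i-r}$ forces $a_{i-r-2}=0$ but leaves $a_{i-r-1}=1$, $a_{i-r}=1$ possible. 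Consequently the digits near the bottom are not ``pinned down'' as you claim: for instance the configuration $a_{i-r}=a_{i-r-1}=1$, $b_{i-r}=b_{i-r-1}=b_{i-r-3}=b_{i-r-4}=0$, $b_{i-r-2}=1$ is consistent with every pairwise bound and with $c_{i-r-2}=4$. The paper must therefore run a case analysis on $c_{i-r-2}\in\{2,3,4\}$ and then on $a_{i-r-3}$ and $a_{i-r-1}$, and the surviving subcase just described (Case II.(ii).b there) is killed only by a propagation argument along the \emph{entire} run: using $8\le d_{i-j}$ together with $a_{i-j}\le 1$ one forces $a_{i-r+1}=1$, $b_{i-r+1}=0$, then $a_{i-r+2}=1$, and so on up to $a_{i-2}=1$, contradicting $a_{i-2}=0$ from (\ref{eqn-ci55}). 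This long-range propagation is exactly the ingredient your sketch omits, and without it the argument does not close.

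Your proposed fallback, an induction on $r$ with Lemmas~\ref{lem-ci3} and~\ref{lem-ci4} as base cases, does not repair this. To invoke the inductive hypothesis you need a value $5$ immediately followed (in decreasing index) by a run of $4$'s of length $r'<r$; the only such configurations in sight start at $c_i$, and for them the conclusion $c_{i-r'-1}\le 4$ is already part of your hypothesis (those entries equal $4$), so the inductive hypothesis yields no new information about $c_{i-r-1}$. At the hypothesized $5$ at position $i-r-1$ you know nothing about the entries below it, so there is no shorter instance to apply the statement to, and the recurrence $s_i = d_i - 3c_i + c_{i-1}$ is not symmetric in the index direction, so one cannot simply reverse the run. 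The paper accordingly does not induct: it treats general $r\ge 3$ directly by the case analysis and the propagation described above, and only quotes Lemmas~\ref{lem-ci3} and~\ref{lem-ci4} to dispose of $r=1,2$.
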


\begin{proof}
  For $r=1$ and $r=2$, the conclusion follows from Lemma~\ref{lem-ci3} and Lemma~\ref{lem-ci4}, respectively. For $r > 2$, suppose to the contrary that $c_{i-r-1} = 5$. Since $s_i = d_i - 3 c_i + c_{i-1} \geq 0$ for each $i$, we have
  \begin{eqnarray*}
    d_i & \geq & 3 c_i - c_{i-1} = 11 ,\\
    d_{i-1} & \geq & 3 c_{i-1} - c_{i-2} = 8, \\
    & & \vdots \\
    d_{i-r} & \geq & 3 c_{i-r} - c_{i-r-1} = 7, \\
    d_{i-r-1} & \geq & 3 c_{i-r-1} - c_{i-r-2} \geq 11,
  \end{eqnarray*}
  where in the last inequality we used the fact that $c_{i-r-2} \leq 4$ by Lemma~\ref{lem-ci2}. Combining with (\ref{eqn-d2}), we get 
  \begin{eqnarray}
    11 & \leq & d_i = 9 - 2a_{i-1} + a_i - b_{i-2} + b_{i-1} - b_i, \label{eqn-ci51} \\
    8 & \leq & d_{i-1} = 9 - 2a_{i-2} + a_{i-1} - b_{i-3} + b_{i-2} - b_{i-1}, \label{eqn-ci52} \\
    & & \vdots \nonumber \\
    7 & \leq & d_{i-r} = 9 - 2a_{i-r-1} + a_{i-r} - b_{i-r-2} + b_{i-r-1} - b_{i-r}, \label{eqn-ci53} \\
    11 & \leq & d_{i-r-1} = 9 - 2a_{i-r-2} + a_{i-r-1} - b_{i-r-3} + b_{i-r-2} - b_{i-r-1}. \label{eqn-ci54}
  \end{eqnarray}
Adding up both two adjacent inequalities above, we have
\begin{eqnarray}
  19 & \leq & 18 - 2a_{i-2} - a_{i-1} + a_i - b_{i-3} - b_i, \label{eqn-ci55} \\
  16 & \leq & 18 - 2a_{i-3} - a_{i-2} + a_{i-1} - b_{i-4} - b_{i-1}, \label{eqn-ci56} \\
  & & \vdots \nonumber \\
  15 & \leq & 18 - 2a_{i-r-1} - a_{i-r} + a_{i-r+1} - b_{i-r-2} - b_{i-r+1}, \label{eqn-ci57} \\
  18 & \leq & 18 - 2a_{i-r-2} - a_{i-r-1} + a_{i-r} - b_{i-r-3} - b_{i-r}. \label{eqn-ci58}
\end{eqnarray}
Notice that (\ref{eqn-ci55}) can be satisfied only if $a_{i-2} = 0$ and $a_{i-1} \leq 1$. Similarly, we get $a_{i-3} \leq 1, \ldots, a_{i-r} \leq 1$ and $a_{i-r-2} = 0$.   
 
  Now we prove that $c_{i-r-2} = 4$. Note that $s_{i-r-1} = d_{i-r-1} - 3 c_{i-r-1} + c_{i-r-2} \geq 0$. Then we have $c_{i-r-2} \geq 3 c_{i-r-1} - d_{i-r-1} \geq 2$. By Lemma~\ref{lem-ci2}, we have $2 \leq c_{i-r-2} \leq 4$. 
  
  If $c_{i-r-2} = 2$, it follows that 
  \begin{equation}\label{eqn-ci59}
  d_{i-r-1} \geq 3 c_{i-r-1} - c_{i-r-2} = 13.
\end{equation}
Also note that $d_{i-r} \geq 3 c_{i-r} - c_{i-r-1} = 7$. Adding up the two inequalities on $d_{i-r-1}$ and $d_{i-r}$ by (\ref{eqn-d2}), we get 
  $$
  20 \leq d_{i-r} + d_{i-r-1} = 18 - 2 a_{i-r-2} - a_{i-r-1} + a_{i-r} - b_{i-r-3} - b_{i-r},
  $$
  which can be satisfied only if $a_{i-r} = 2$ and $a_{i-r-1} = a_{i-r-2} = b_{i-r} = b_{i-r-3} = 0$. Then we have 
  \begin{eqnarray*}
  d_{i-r-1} & = & 9 - 2 a_{i-r-2} + a_{i-r-1} - b_{i-r-3} + b_{i-r-2} - b_{i-r-1} \\
  & = & 9 + b_{i-r-2} - b_{i-r-1} \\
  & \leq & 11,
\end{eqnarray*}
which contradicts to (\ref{eqn-ci59}).   

If $c_{i-r-2} = 3$, we have $d_{i-r-1} \geq 3 c_{i-r-1} - c_{i-r-2} = 12$. Adding up the following two inequalities
\begin{eqnarray}
  12 & \leq & d_{i-r-1} = 9 - 2 a_{i-r-2} + a_{i-r-1} - b_{i-r-3} + b_{i-r-2} - b_{i-r-1}, \label{eqn-ci510} \\
  7 & \leq & d_{i-r} = 9 - 2 a_{i-r-1} + a_{i-r} - b_{i-r-2} + b_{i-r-1} - b_{i-r}, \nonumber 
\end{eqnarray}
we get
$$
19 \leq 18 - 2a_{i-r-2} - a_{i-r-1} + a_{i-r} - b_{i-r-3} - b_{i-r}.
$$
Since $a_{i-r} \leq 1$, this inequality holds only if $a_{i-r} = 1$ and $a_{i-r-2} = a_{i-r-1} = b_{i-r} = b_{i-r-3} = 0$. Back to (\ref{eqn-ci510}), we have 
\begin{eqnarray*}
  12 \leq  d_{i-r-1} & = & 9 + b_{i-r-2} - b_{i-r-1} \\
  & \leq & 11,
\end{eqnarray*}
which is a contradiction. 

Thus, we have $c_{i-r-2} = 4$. Note that $s_{i-r-2} = d_{i-r-2} - 3 c_{i-r-2} + c_{i-r-3} \geq 0$. Then $d_{i-r-2} \geq 3 c_{i-r-2} - c_{i-r-3} \geq 7$. By (\ref{eqn-d2}), we get
\begin{equation}\label{eqn-ci511}
  7 \leq d_{i-r-2} = 9 - 2 a_{i-r-3} + a_{i-r-2} - b_{i-r-4} + b_{i-r-3} -b_{i-r-2}.
\end{equation}
Adding $11 \leq d_{i-r-1}$, we have
\begin{equation}\label{eqn-ci512}
  18 \leq 18 - 2 a_{i-r-3} - a_{i-r-2} + a_{i-r-1} - b_{i-r-4} - b_{i-r-1}.
\end{equation}
It then follows that $a_{i-r-3} \leq 1$. Now we discuss the possible values of $a_{i-r-3}$. 
\begin{itemize}
  \item[I.] $a_{i-r-3} = 1$. 
    
    From (\ref{eqn-ci512}), it follows that $a_{i-r-1} = 2$ and $b_{i-r-1} = b_{i-r-4} = 0$. By (\ref{eqn-ci54}) and (\ref{eqn-ci511}), we have 
    \begin{eqnarray*}
      7 & \leq & d_{i-r-2} =  7 + b_{i-r-3} - b_{i-r-2},\\
      11 & \leq & d_{i-r-1} = 11 - b_{i-r-3} + b_{i-r-2}.
    \end{eqnarray*}
    Thus, $b_{i-r-3} = b_{i-r-2}$, $d_{i-r-1} = 11$ and $d_{i-r-2} = 7$. Since $s_{i-r-2} = d_{i-r-2} - 3 c_{i-r-2} + c_{i-r-3} \geq 0$, we have $c_{i-r-3} \geq 3 c_{i-r-2} - d_{i-r-2} = 5$ and then $c_{i-r-3} = 5$. It then follows that $d_{i-r-3} \geq 3 c_{i-r-3} - c_{i-r-4} \geq 11$ because $c_{i-r-4} \leq 4$ by Lemma~\ref{lem-ci2}. However, by (\ref{eqn-d2}), we have 
    \begin{eqnarray*}
      11 \leq d_{i-r-3} & = & 9 - 2 a_{i-r-4} + a_{i-r-3} - b_{i-r-5} + b_{i-r-4} - b_{i-r-3} \\
      & = & 10 - 2 a_{i-r-4} - b_{i-r-5} - b_{i-r-3} \\
      & \leq & 10,
    \end{eqnarray*}
    leading to a contradiction.
  
  \item[II.] $a_{i-r-3} = 0$. 
    
    Then inequality (\ref{eqn-ci512}) becomes $18 \leq 18 + a_{i-r-1} - b_{i-r-4} - b_{i-r-1}$. We continue to discuss the possible values of $a_{i-r-1}$.
    \begin{itemize}
      \item[II.(i).] $a_{i-r-1} = 0$. 
	
	By (\ref{eqn-ci512}), we have $a_{i-r} = b_{i-4-1} = b_{i-r-4} = 0$. Then inequality (\ref{eqn-ci54}) becomes $11 \leq d_{i-r-1} = 9 - b_{i-r-3} + b_{i-r-2}$, which implies that $b_{i-r-2}= 2$ and $b_{i-r-3} = 0$. It then follows from (\ref{eqn-ci511}) that $d_{i-r-2} = 7$. Since $s_{i-r-2} = d_{i-r-2} - 3 c_{i-r-2} + c_{i-r-3} \geq 0$, we have $c_{i-r-3} \geq 3 c_{i-r-2} - d_{i-r-2} = 5$ and then $c_{i-r-3} = 5$. There is a contradiction as
	the following.
	\begin{eqnarray*}
	  11 & \leq & 3 c_{i-r-3} - c_{i-r-4} \\
	  & \leq & d_{i-r-3} \\
	  & = & 9 - 2a_{i-r-4} + a_{i-r-3} - b_{i-r-5} + b_{i-r-4} - b_{i-r-3} \\
	  & = & 9 - 2 a_{i-r-4} - b_{i-r-5} - b_{i-r-3}\\
	  & \leq & 9.
	\end{eqnarray*}

      \item[II.(ii).] $a_{i-r-1} = 1$. 
	
	By (\ref{eqn-ci58}), we get $18 \leq 17 + a_{i-r} - b_{i-r-3} - b_{i-r}$. Together with $a_{i-r} \leq 1$, we have $a_{i-r} = 1$ and $b_{i-r} = b_{i-r-3} = 0$. From (\ref{eqn-ci53}) and (\ref{eqn-ci54}), it follows that 
	\begin{eqnarray*}
	  7 & \leq & 8 - b_{i-r-2} + b_{i-r-1},\\
	  11 & \leq & 10 + b_{i-r-2} - b_{i-r-1}.
	\end{eqnarray*}
	Thus, $b_{i-r-2} - b_{i-r-1} = 1$. There are the following two subcases. 
	\begin{itemize}
	  \item[II.(ii).a.] $b_{i-r-2} = 2$ and $b_{i-r-1} = 1$. 
	    
	    By (\ref{eqn-ci511}), we have 
	    $$
	    7 \leq d_{i-r-2} = 7 - b_{i-r-4},
	    $$
	    which implies that $b_{i-r-4} = 0$ and $d_{i-r-2} = 7$. With similar argument to Case II.(i), there is also a contradiction on $d_{i-r-3}$.

	  \item[II.(ii).b.] $b_{i-r-2} = 1$ and $b_{i-r-1} = 0$. 
	    
	    Then (\ref{eqn-ci511}) becomes $7 \leq 8 - b_{i-r-4}$, which implies that $b_{i-r-4} \leq 1$. 
	    
	    If $b_{i-r-4} = 1$, we have $d_{i-r-2} = 7$. It then follows that $c_{i-r-3} \geq 3c_{i-r-2} - d_{i-r-2} = 5$. Similar argument to Case II.(i) also leads to a contradiction on $d_{i-r-3}$. 
	    
	    If $b_{i-r-4} = 0$, we summarize the information on $\{a_i\}$ and $\{b_i\}$ in the tables $I_a$ and $I_b$, respectively.
	    \begin{equation*}
	      I_a := \left[ \begin{array}{cccc}
		a_{i-r} & a_{i-r-1} & a_{i-r-2} & a_{i-r-3} \\
		1 & 1 & 0 & 0
	      \end{array}\right]
	    \end{equation*}
	    \begin{equation*}
	      I_b := \left[ \begin{array}{ccccc}
		b_{i-r} & b_{i-r-1} & b_{i-r-2} & b_{i-r-3} & b_{i-r-4} \\
		0 & 0 & 1 & 0 & 0 
	      \end{array}\right]
	    \end{equation*}
	    Recall that
	    $$
	    8 \leq d_{i-j} = 9 - 2 a_{i-j-1} + a_{i-j} - b_{i-j-2} + b_{i-j-1} - b_{i-j},
	    $$
	    for $1 \leq j \leq r -1$ and
	    $$
	    a_{i-j} \leq 1,
	    $$
	    for $3 \leq j \leq r$. For $j = r-1$, we have 
	    $$
	    8 \leq d_{i-r+1} = 9 - 2a_{i-r} + a_{i-r+1} - b_{i-r-1} + b_{i-r} - b_{i-r+1}
	    $$
	    and $a_{i-r+1} \leq 1$. It then follows from $I_a$ and $I_b$ that $a_{i-r+1} = 1$ and $b_{i-r+1}=0$. Similarly, if we continue to do this for $j = r-2, \ldots, 2$, we get $a_{i-2} = 1$ and $b_{i-1} = 0$, which contradicts to the fact that $a_{i-2} = 0$ by (\ref{eqn-ci55}). 

	\end{itemize}

      \item[II.(iii).] $a_{i-r-1} = 2$. 
	
	By (\ref{eqn-ci58}), we have
	\begin{eqnarray*}
	18 & \leq & 18 - 2a_{i-r-2} - a_{i-r-1} + a_{i-r} - b_{i-r-3} - b_{i-r} \\
	& = & 16 + a_{i-r} - b_{i-r-3} - b_{i-r}.
      \end{eqnarray*}
      It then follows that $a_{i-r} = 2$, which is a contradiction to $a_{i-r} \leq 1$. 

  \end{itemize}

\end{itemize}
To sum up, if $c_{i-r-1} = 5$, we can always derive a contradiction. Thus, the conclusion follows. 
\qed
\end{proof}

\begin{corollary}\label{coro-ci6}
  Let $t$ be an integer with $1 \leq t \leq m$. If $c_i = c_{i-t} = 5$ for some $i$, and $c_{i-1} \leq 4$, $c_{i-2} \leq 4$, $\ldots$, $c_{i-(t-1)} \leq 4$, then there exists some $\ell$ with $1 \leq \ell \le t-1$ such that $c_{i-\ell} < 4$.
\end{corollary}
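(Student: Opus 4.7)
The plan is to prove Corollary~\ref{coro-ci6} by contradiction, using Lemma~\ref{lem-ci5} essentially as a black box. I would suppose that no such $\ell$ exists; combined with the standing hypothesis $c_{i-\ell} \leq 4$ for every $1 \leq \ell \leq t-1$, this forces $c_{i-1} = c_{i-2} = \cdots = c_{i-(t-1)} = 4$. The goal is then to show that this constant block of $4$'s, sitting between two $5$'s, cannot coexist with $c_{i-t} = 5$.

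The edge case $t=1$ should be dealt with first and is easy: the hypothesis would then read $c_i = c_{i-1} = 5$, which is immediately ruled out by Lemma~\ref{lem-ci2}, so the statement is vacuous. (Strictly speaking, the conclusion asks for an $\ell$ in the empty range $1\le\ell\le 0$, but since the hypothesis cannot be satisfied this causes no issue.)

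For the main case $t \geq 2$, I would set $r = t-1 \geq 1$ and apply Lemma~\ref{lem-ci5} directly: we have $c_i = 5$ together with $c_{i-1} = c_{i-2} = \cdots = c_{i-r} = 4$, so the lemma delivers $c_{i-r-1} = c_{i-t} \leq 4$. This contradicts the hypothesis $c_{i-t} = 5$, finishing the proof.

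The ``hard part'' is really just recognising that the corollary is essentially the contrapositive of Lemma~\ref{lem-ci5}: once one sees that a maximal block of $4$'s following a $5$ must be terminated by something $\leq 4$, no further work with Theorem~\ref{thm-carry}, the digits $d_i$, or the ternary expansions of $a$ and $b$ is needed. Consequently I would expect the written proof to be only a few lines long.
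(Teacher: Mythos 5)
Your proof is correct and follows essentially the same route as the paper: assume all $c_{i-\ell}=4$ for $1\le\ell\le t-1$ and invoke Lemma~\ref{lem-ci5} to force $c_{i-t}\le 4$, contradicting $c_{i-t}=5$. Your separate handling of the degenerate case $t=1$ via Lemma~\ref{lem-ci2} is a small extra care the paper's proof glosses over, but it does not change the argument.
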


\begin{proof}
  Assume to the contrary that for every $\ell \in \{1, 2, \ldots, t-1\}$, we have $c_{i-\ell} = 4$. That is,
  $$
  c_i = 5, \quad c_{i-1} = c_{i-2} = \cdots = c_{i-(t-1)} = 4.
  $$
  Then by Lemma~\ref{lem-ci5}, we have $c_{i-t} \leq 4$, contradicting to the condition that $c_{i-t} = 5$, which completes the proof.
  \qed
\end{proof}

\begin{proof}[Theorem~\ref{thm-goal42}]
  As discussed before, it suffices to prove (\ref{eqn-ci0}). If $c_i \leq 4$ for all $0 \leq i \leq m-1$, inequality (\ref{eqn-ci0}) directly holds. We now assume that there exists an $h$ with $0 \leq h \leq m-1$, such that $c_h = 5$. It is easily seen that $c_h, c_{h-1}, \ldots, c_{h-(m-1)}$ is a shift reverse of $c_0, c_1, \ldots, c_{m-1}$. Without loss of generality, we further assume that $c_{h-i_1} = c_{h-i_2} = \cdots = c_{h-i_s} = 5$, where $0 = i_1 \leq i_2 \leq
  \cdots \leq i_s < m$, $s \geq 1$ and $c_{h-j} \leq 4$ for each $j \in \{0, 1, \ldots, m-1 \}\setminus \{i_1, i_2, \ldots, i_s\}$. By Lemma~\ref{lem-ci2}, we have $i_2 \geq i_1 + 2$, $i_3 \geq i_2 + 2$, $\ldots$, $i_s \geq i_{s-1} + 2$ and $m \geq i_s + 2$. Using Corollary 
  ~\ref{coro-ci6}, we have the following inequalities on the sum of $c_i$'s in each segment.
  \begin{eqnarray*}
    c_{h-i_1} + c_{h-i_1 -1} + \cdots + c_{h-(i_2-1)} & \leq & 4 (i_2 - i_1) \\
    c_{h-i_2} + c_{h-i_2- 1} + \cdots + c_{h-(i_3 - 1)} & \leq & 4 (i_3 - i_2) \\
    \vdots & & \\
    c_{h-i_s} + c_{h-i_s - 1} + \cdots + c_{h-(m-1)} & \leq & 4 (m - i_s)
  \end{eqnarray*}
  Summing up all the inequalities above, we get
  $$
  \sum_{i=0}^{m-1} c_i = \sum_{j=0}^{m-1} c_{h-j} \leq 4m,
  $$
which completes the proof.
\qed
\end{proof}



\end{document}